\newtheorem{Thm}{Theorem}[section]
\newtheorem{Lem}[Thm]{Lemma}
\newtheorem{Def}[Thm]{Definition}
\newtheorem{Prop}[Thm]{Proposition}
\newtheorem{Coro}[Thm]{Corollary}
\newtheorem{Rem}{Remark}[section]
\newcommand{\p}{\partial}
\renewcommand{\div}{\sum_{\alpha=1}^d\p_{\alpha}}
\newcommand{\suma}{\sum_{\alpha=1}^d}
\newcommand{\R}{\mathbb R}
\newcommand{\N}{\mathbb N}
\newcommand{\etab}{\eta}
\newcommand{\xib}{\xi}
\newcommand{\ninf}[1]{\| #1 \|_{\infty}}
\newcommand{\T}{\mathcal T}
\newcommand{\E}{(K,L) \in\mathcal E_r}
\newcommand{\dt}{\Delta t}
\newcommand{\NK}{\mathcal N(K)}
\newcommand{\intt}{\displaystyle\int_{t^n}^{t^{n+1}}}
\newcommand{\intRR}{\iint_{\R^d \times \R_+}}
\newcommand{\intR}{\int_{\R^d}}
\renewcommand{\O}{\Omega}
\newcommand{\sumk}{\sum_{n=0}^{N_T}}
\newcommand{\Tr}{\T_r}
\newcommand{\dTr}{\partial\T_r}
\newcommand{\sumNK}{\sum_{L\in \NK}}
\newcommand{\sumK}{\sum_{K\in\Tr}}
\newcommand{\sumE}{\sum_{\E}}
\newcommand{\mub}{\overline{\mu}}
\newcommand{\mubo}{\overline{\mu}_0}
\def\ov#1{\overline{#1}}
\def\Ee{\mathcal{E}}
\def\nico#1{{
#1}}
\def\be{\begin{equation}}
\def\ee{\end{equation}}
\def\a{\alpha}
\def\sKL{{\sigma_{KL}}}
\def\Mm{\mathcal{M}}
\def\Cc{\mathcal{C}}
\def\Tt{\mathcal{T}}
\def\Ii{\mathcal{I}}
\def\nn{\nonumber}
\def\oO{\overline{\Omega}}
\begin{document}

\title[Error estimate for explicit FV scheme]
{Error estimate for time-explicit finite volume approximation of strong solutions to systems of conservation laws}

\author{Cl{\'e}ment Canc{\`e}s}
\address{
Cl\'ement Canc\`es ({\tt clement.cances@inria.fr}).
Team RAPSODI, Inria Lille -- Nord Europe, 40 av. Halley, F-59650 Villeneuve d'Ascq, France.
}

\thanks{This work was supported by the LRC Manon (Mod{\'e}li\-sation et approximation
num{\'e}rique orient{\'e}es pour l'{\'e}nergie nucl{\'e}aire -- CEA/DM2S-LJLL) 
and by the Nuclear System and Scenarios federative project of the NEEDS
program (CNRS -- CEA -- AREVA -- EDF -- IRSN)}
\author{H{\'e}l{\`e}ne Mathis
}
\address{
H\'el\`ene Mathis ({\tt helene.mathis@univ-nantes.fr}).
Universit{\'e} de Nantes, Laboratoire de Math{\'e}matiques Jean Leray, 2, Rue
de la Houssini{\`e}re, 44322 Nantes Cedex 03, France.
}
\author{Nicolas Seguin}
\address{
Nicolas Seguin ({\tt nicolas.seguin@univ-nantes.fr}).
Universit{\'e} de Nantes, Laboratoire de Math{\'e}matiques Jean Leray, 2, Rue
de la Houssini{\`e}re, 44322 Nantes Cedex 03, France.
}

\maketitle


%



\begin{abstract}
  We study the finite volume approximation of strong solutions to
  nonlinear systems of conservation laws.  We focus on time-explicit
  schemes on unstructured meshes, with entropy satisfying numerical
  fluxes.  The numerical entropy dissipation is quantified at each
  interface of the mesh, which enables to prove a weak--$BV$ estimate
  for the numerical approximation under a strengthened CFL condition.
  Then we derive error estimates in the multidimensional case, using
  the relative entropy between the strong solution and its finite
  volume approximation. The error terms are carefully studied, leading
  to a classical $\mathcal{O}(h^{1/4})$ estimate in $L^2$ under this
  strengthened CFL condition.
\end{abstract}
\vspace{10pt}

{\small {\bf Keywords.}
Hyperbolic systems, finite volume scheme, relative entropy, error estimate
\vspace{0pt}

{\bf AMS subjects classification. }
35L65, 65M08, 65M12, 65M15
}


\section{Introduction}

The aim of this paper is to provide an {\em a priori} 
error estimate for time-explicit finite volume approximation 
on unstructured meshes of strong solutions to hyperbolic systems 
of conservation laws. Our proof relies on the control of 
perturbations coming from the discretization in the uniqueness 
proof proposed by R. J. DiPerna~\cite{DiP79} and C. M. Dafermos~\cite{Daf79} 
(see also~\cite{dafermosBook}). 

Numerous studies on error estimates for hyperbolic problems were 
published in the last decades. Let us first highlight some optimal 
convergence rates that are established in the literature. 
Classical first-order finite 
  difference methods on cartesian grids for the
  approximation of smooth solutions of linear equations can be
  directly studied by estimating the truncation error, leading to 
  an $\mathcal{O}(h)$ error estimate, where the length $h$ is the characteristic 
  size of the grid.
  Adapting S. N. Kruzhkov's doubling variable
  technique \cite{Kru70}, N. N. Kuznetsov proved in \cite{Kuz76} that finite
  difference schemes for nonlinear one-dimensional conservation laws
  converge towards the entropy weak solution with the optimal rate
  $\mathcal{O}(h^{1/2})$ in the space-time $L^1$ norm. 
  The optimal rate $\mathcal{O}(h^{1/2})$ has been recovered by 
  B. Merlet and J. Vovelle~\cite{MV07} 
  and by F. Delarue and F. Lagouti\`ere~\cite{DL11} 
  for weak solutions to the linear transport equation approximated by 
  the upwind finite volume scheme on unstructured grids.
  The rate $\mathcal{O}(h^{1/2})$ appears to stay optimal when 
  strong solutions to linear transport equations are approximated on two-dimensional 
  unstructured grids as shown by C. Johnson and J. Pikt\"aranta in~\cite{JP}.
  Since linear transport enters our framework, we cannot expect a better {\em a priori} error estimate 
  than $\mathcal{O}(h^{1/2})$. However, if one restricts to 
  dimensional one, an estimate in $\mathcal{O}(h)$ can be derived even for nonlinear
  systems of conservation laws, see D. Bouche \emph{et al.}  \cite{BGP}.

Many studies exist when considering entropy
  \emph{weak} solutions, based on nonlinear techniques which extend in
  some sense N. N. Kuznetsov's article \cite{Kuz76}. These works focus
  on the multidimensional case with unstructured meshes, for scalar
  conservation laws \cite{CGH93, CClF94,vila94,EGGH98,chainais99}.
  They mainly use the notion of error measures, see for instance
  \cite{BP98}, and lead the an error estimate in
  $\mathcal{O}(h^{1/4})$ (recall that the convergence has been
  initially addressed by A. Szepessy~\cite{SzCV}). The
  key-point in these multidimensional studies is the control of the
  $BV$ semi-norm. For unstructured meshes, one can only prove that it
  grows as $h^{-1/2}$ (even in the linear scalar case,
  cf.~\cite{Des04}), which actually is the main barrier to obtain a
  better rate of convergence.
  Similar tools allowed V. Jovanovic and C. Rohde to propose 
  error estimates~\cite{JR05} for the finite volume approximation of the solution 
  to Friedrich's systems (i.e., linear symmetric hyperbolic systems)

As mentioned above, we are interested in multidimensional
  systems of conservation laws.  The solutions to such systems may
  develop discontinuities in finite time and, since the pioneering
  work of P. D. Lax \cite{Lax2}, entropy conditions are added to select
  physical/admissible solutions. Recently, it has been shown by C. De
  Lellis and L. Sz{\'e}kelyhidi Jr. in \cite{DLS09,DLS10} that such a
  criterion is not sufficient in the multidimensional case.
  Nonetheless, it is known since several decades (see in
  particular~\cite{DiP79,Daf79}) that if a strong solution exists,
  then there exists a unique entropy weak solution corresponding to
  the same initial data, and that it coincides with this strong
  solution.  Moreover, it can be shown that entropy weak solutions are
  stable with respect to strong solutions. Since error estimates of
  any approximation are based on the stability properties of the
  model, we restrict this study to strong solutions which are known to
  exist, in finite time, and to be unique
  \cite{Kato75,LiTaTsien_book,dafermosBook}.   
  As in \cite{JR06}, we use
  the notion of relative entropy to compare the approximate solution
  with a smooth solution. The mathematical techniques are basically
  the same as in the scalar case (we follow in particular
  \cite{EGGH98, chainais99}): weak-$BV$ estimates and error measures.  
  The main result of this paper is an {\em a priori} error estimate of 
  order $\mathcal{O}(h^{1/4})$ in the space-time $L^2$ norm for 
  first-order time-explicit finite volume schemes under classical assumptions on
  the numerical fluxes \cite{bouchut_book,TadmorBook}. 
  One key ingredient is an extension to the system case of the so-called 
  weak--$BV$ estimate introduced in the scalar case in \cite{EGGH98, chainais99}.
  As in the scalar, it relies on a quantification of the numerical dissipation 
  and requires a slightly reduced CFL condition. 
  We finally obtain an error estimate in $\mathcal{O}(h^{1/4})$, and
  simplify the framework of a study of V. Jovanovic and C. Rohde~\cite{JR06} 
  where time-implicit methods are considered and the weak-$BV$ estimate is assumed). 

Concerning higher order methods, let us mention the result~\cite{CH00} of 
C. Chainais-Hillairet who proved an error estimate or order $\mathcal{O}(h^{1/4})$ 
for the time-explicit second order finite volume discretization with flux limiters~\cite{vL79}
of nonlinear scalar conservations laws. 
The strategy exploited in~\cite{CH00} consists in showing that the solution to the second order 
remains close to the solution to the monotone scheme without limiters. Such a strategy 
might be adapted in our framework but provides very under-optimal estimates. 
	High order time-implicit discontinuous Galerkin
    methods have also been analyzed in details by Hiltebrand and Mishra in
    \cite{HM14}. Using appropriate weak-$BV$ estimates, they prove
    convergence towards entropy measured-valued solutions of
    multidimensional systems of conservation laws. No error estimate
    has been derived yet up to our knowledge.

\begin{Rem}
As it is well known, solutions to hyperbolic nonlinear systems of conservation 
laws may develop discontinuities after a finite time. However, the occurence 
of such discontinuities is prohibited when appropriate relaxation terms are added 
to the systems~\cite{HN03, Yong04} (we then have hyperbolic balance laws instead of hyperbolic 
conservation laws). 
By adapting the analysis carried out by V. Jovanovic and C. Rohde in~\cite{JR06}, 
such terms can be considered in the analysis. A time explicit treatment of the source terms 
would lead to a reduced CFL. Therefore, in the case of stiff relaxation terms, an implicit 
treatment of the source terms is relevant. We refer to the work~\cite{chainais01} of C. Chainais-Hillairet 
and S. Champier for an error estimate in the case of a scalar balance law. 
\end{Rem}

\subsection{Hyperbolic systems of conservation laws}

\subsubsection{Strong, weak, and entropy weak solutions}
We consider a  system of $m$ conservation laws 
\begin{equation}\label{eq:P}
\p_t u(x,t) + \sum_{\a =1}^d \p_\a f_\a(u)(x,t) = 0. 
\end{equation}
System~\eqref{eq:P} is set on the whole space $x \in \R^d$, and for
any time $t\in[0,T]$, $T>0$.  We assume that there exists a convex
bounded subset of $\R^m$, denoted by $\O$ and called \emph{set of the
  admissible states} such that
\begin{equation}\label{eq:Omega}
u(x,t) \in \Omega, \quad \forall (x,t) \in \R^d \times [0,T]. 
\end{equation}
System~\eqref{eq:P} is complemented with the initial condition
\begin{equation}\label{eq:u0}
u(x,0) = u_0(x) \in \O, \quad \forall x \in \R^d. 
\end{equation}

We assume for all $\a \in \{1,\dots, d\}$ the functions $f_\a : \R^m
\to \R^m$ to belong to 
$C^2(\oO;\R^m)$, and be such that $D f_\a$ are diagonalizable with 
real eigenvalues, where $D$ denotes the differential with respect to
the variables $u$.

System~\eqref{eq:P} is endowed with a uniformly convex entropy 
$\eta \in C^2(\oO; \R)$ such that there exists $\beta_1\ge \beta_0 >0$ so that 
\begin{equation}\label{eq:beta}
{\rm spec}\left( D^2 \eta(u) \right) \subset [\beta_0;\beta_1], \quad \forall
u \in \oO,
\end{equation} 
and the corresponding entropy flux $\xi \in C^2(\oO;\R^d)$ 
satisfies for all $\a \in \{1,\dots, d\}$
\be
\label{eq:xi}
D \xi_\a(u) =  D\eta (u) D f_\a(u),
  \qquad \forall u \in \O.
\end{equation}
Without loss of generality, we assume that 
$\eta(u) \ge 0$ for all $u \in \oO.$
The existence of the entropy flux $\xi$ amounts to assume the
integrability condition (see e.g.~\cite{GR96})
\begin{equation}\label{eq:entro_commute}
D^2 \eta(u)  D f_\a(u) = D f_\a(u)^T D^2\eta(u), \qquad \forall u \in
\O.
\end{equation}
Let us introduce the quantity $L_f$ by
\begin{equation}\label{eq:Lf_Rayleigh}
L_f = \sup_{\a \in \{1,\dots,d\}} \sup_{(u,v) \in \O^2} \sup_{w \in
  \R^m\setminus\{0\}} 
\left| \frac{w^T D^2\eta(v) D f_\alpha(u) w}{w^T  D^2 \eta(v)
    w}\right|.
\end{equation}
\begin{Rem}
Notice that, in view of~\eqref{eq:entro_commute}, the matrix $D
f_\a(u)$ is self-adjoint for the scalar product 
$
\langle w, v \rangle_{u} = w^T D^2 \eta(u) v.$
Therefore, the Rayleigh quotient 
\be
\label{eq:Rayleigh2}
\sup_{w \in \R^m\setminus\{0\}} 
\left| \frac{w^TD^2 \eta(u)D f_\alpha(u) w}{w^T  D^2 \eta(u) w}\right| 
= \sup_{w \in \R^m\setminus\{0\}} \frac{\langle w, D f_\alpha(u) w
  \rangle_{u}}{\langle w, w \rangle_{u}}
\end{equation}
provides exactly the largest eigenvalue in absolute value of $D f_\alpha(u)$. 
The situation in~\eqref{eq:Lf_Rayleigh} is more intricate than
in~\eqref{eq:Rayleigh2} since $u$ might be different of $v$, but the 
quantity $L_f$ is bounded in 
view of the boundedness of $\O$ and of the regularity of $f_\a$ and $\eta$.
\end{Rem}

Despite it is well-known that even for smooth initial data $u_0$, the solutions 
of~\eqref{eq:P}--\eqref{eq:u0} may develop discontinuities after a finite time, 
our study is restricted to the approximation of smooth solutions
$u \in W^{1,\infty}(\R^d\times\R_+;\O)$
to~\eqref{eq:P}--\eqref{eq:u0}. Such solutions are called \emph{strong
  solutions}, and they satisfy the conservation of the entropy
\begin{equation}\label{eq:entro_strong}
\p_t \eta(u) + \sum_{\a=1}^d \p_\a \xi_\a(u) = 0 \quad \text{ in }
\R^d \times \R_+.
\end{equation}
We refer for instance to~\cite{Kato75, LiTaTsien_book,dafermosBook} 
for specific results on strong solutions of systems of conservation laws.

Assuming that $u_0 \in L^\infty(\R^d;\O)$, a function $u \in
L^\infty(\R^d\times\R_+;\O)$ is said to be a \emph{weak solution} 
to~\eqref{eq:P}--\eqref{eq:u0}
if, for all $\phi \in C^1_c(\R^d \times \R_+;\R^n)$, one has 
\begin{equation}\label{eq:weak}
\iint_{\R^d\times\R_+} u \p_t \phi \; dxdt + \int_{\R^d} u_0 \phi(\cdot,0) \; dx 
+ \iint_{\R^d\times\R_+} \sum_{\a = 1}^d f_\a(u) \p_\a \phi\; dxdt = 0.
\end{equation}
Moreover, $u$ is said to be an \emph{entropy weak solution}
to~\eqref{eq:P}--\eqref{eq:u0}
if $u$ is a weak solution, i.e., $u$ satisfies~\eqref{eq:weak}, and if, for all 
$\psi \in C^1_c(\R^d\times\R_+);\R_+)$, it satisfies 
\begin{equation}\label{eq:entro_weak}
\iint_{\R^d\times\R_+} \eta(u) \p_t \psi dxdt + \int_{\R^d} \eta(u_0) \psi(\cdot,0) dx 
+ \iint_{\R^d\times\R_+} \sum_{\a = 1}^d \xi_\a(u) \p_\a \psi dxdt \ge 0.
\end{equation}

\subsubsection{Relative entropy}

In \cite{Kru70}, Kruzhkov is able to compare two entropy weak
solutions using the doubling variable technique. In  \cite{Kuz76},
such method has been extended in order to compare an entropy weak
solution with an approximate solution.
In the case of systems of conservation laws, these techniques no
longer work.  Basically, the family of entropy--entropy flux pairs
$(\eta,\xi)$ is not sufficiently rich to control the difference
between two solutions.  Nevertheless, let us assume that one of these
solutions is a strong solution, $u$ in the sequel, and introduce:

\begin{Def}[Relative entropy]\label{def:rel_entro}
Let $u,v \in \O$. The relative entropy of $v$ w.r.t. $u$ is defined by 
$$
H(v,u) = \eta(v) - \eta(u) -D\eta(u)(v-u), 
$$
and the corresponding relative entropy fluxes $Q:\Omega\times\Omega
\to \R^d$ are
$$
Q_\a(v,u) = \xi_\a(v) - \xi_\a(u) -  D\eta(u) (f_\a(v)- f_\a(u)), \quad 
\forall \a \in \{1,\dots, d\}. 
$$
\end{Def}

The notion of relative entropy for systems of conservation laws goes
back to the early works of DiPerna and Dafermos (see \cite{DiP79},
\cite{Daf79} and the condensed presentation in \cite{dafermosBook}).
It has also been extensively used for the study of hydrodynamic limits
of kinetic equations (see the first works \cite{Yau91} and
\cite{BGL93}, but also \cite{StR09} for more recent results). For
systems of conservation laws, one can check that, given a strong
solution $u$ and an entropy weak solution $v$ with respective initial
data $u_0$ and $v_0$, one has
\begin{equation}
  \label{eq:EDPsurH}
  \p_t H(v,u) + \sum_{\a =1}^d \p_\a Q_\a(v,u) \leq - \sum_{\a =1}^d
  (\p_\a u)^T Z_\a(v,u)
\end{equation}
in the weak sense, where
\begin{equation}
  \label{eq:Z}
  Z_\a(v,u)= D^2 \eta(u) \bigl( f_{\alpha}(v)-f_{\alpha}(u)-
D f_{\alpha}(u) (v-u) \bigr) .
\end{equation}
On the other hand, it follows from the definition of $H$ that 
\begin{equation}\label{eq:H_int}
H(v,u) = \int_0^1 \int_0^\theta (v-u)^T D^2 \eta(u + \gamma
(v-u)) (v-u) \; d \gamma d \theta, 
\end{equation}
which, together with~\eqref{eq:beta}, leads to 
\begin{equation}\label{eq:Mbeta}
\frac{\beta_0}2 |v-u|^2 \le H(v,u) \le \frac{\beta_1}{2}|v-u|^2, \quad
\forall u,v\in \O. 
\end{equation}
If $u$ is assumed to be a strong solution, its first derivative is
bounded and by a classical localization procedure
\emph{{\`a} la} Kruzhkov and a Gronwall lemma, one obtains a
$L^2_\text{loc}$ stability estimate for any $r>0$
\begin{equation}
  \label{eq:stabL2}
  \int_{|x|<r} |v(x,T)-u(x,T)|^2 dx \leq C(T,u) \int_{|x|<r+ L_fT}
  |v_0(x)-u_0(x)|^2 dx ,
\end{equation}
where the dependence of $C$ on $u$ reflect the needs of smoothness on
$u$ ($C$ blows up when $u$ becomes discontinuous). This inequality,
rigorously proved in \cite{dafermosBook}, provides a weak--strong
uniqueness result.
Similar (but more sophisticated) ideas have been
applied to other fluid systems, see for instance~\cite{lions_book} and \cite{FN12}
for more recent developments. 

\begin{Rem}
  In \cite{Tza05}, Tzavaras studies the comparison of solutions of a
  hyperbolic system with relaxation with solutions of the associated
  equilibrium system of conservation laws. He also makes use of the
  relative entropy for strong solutions.
 Very similar questions have been addressed in
  \cite{BV05,BTV09} for the convergence of kinetic equations towards
  the system of gas dynamics. Here again, only strong solutions of the
  Euler equations are considered. To finish the bibliographical
  review, let us mention the work by Leger and Vasseur \cite{LV11}
  where the reference solution may include some particular
  discontinuities.
\end{Rem}

\begin{Rem}
  \label{rem:Friedrichs}
  For general conservation laws, the relative entropy is not
  symmetric, i.e, $H(u,v) \neq H(v,u)$ and $Q(u,v) \neq Q(v,u)$. In
  the very particular case of \emph{Friedrichs systems}, i.e. when
  there exist symmetric matrices $A_\a \in \R^{m\times m}$ ($\a \in
  \{1,\dots, d\}$) such that $f_\a(u) = A_\a u$, then $u\mapsto |u|^2$
  is an entropy and the corresponding entropy flux $\xi$
  is $\xi_\a(u) = u^T A_\a u$, ($\a \in \{1,\dots, d\}$). It is
  then easy to check that
  $$
  H(v,u) =H(v,u) = |u-v|^2, \qquad   Q_\alpha(v,u) = Q_\alpha(u,v)=(v-u)^T A_\a (v-u),
  $$
  and $Z_\alpha(v,u) = 0$ for all $(u,v) \in \R^m$. As a consequence,
  inequality~\eqref{eq:EDPsurH} becomes
  $$
  \p_t H(v,u) + \sum_{\a =1}^d \p_\a Q_\a(v,u) \leq 0 ,
  $$
  even if $u$ is only a weak solution. This allows to make use of the
  doubling variable technique~\cite{Kru70} to compare $u$ to $v$,
  recovering the classical uniqueness result for Friedrichs systems
  \cite{Fri54}.
\end{Rem}

Our aim is to replace the entropy weak solution $v$
in~\eqref{eq:EDPsurH} by an approximate solution provided by finite
volume schemes on unstructured meshes. Following the formalism
introduced in~\cite{EGGH98}, this makes appear in~\eqref{eq:EDPsurH}
bounded Radon measures which can be controlled, leading to error
estimates in $h^{1/4}$ between a strong solution and its finite volume
approximation, $h$ being the characteristic size of the cells of the
mesh.
The purpose of the following section is to define the finite
volume scheme and to recall some classical properties required on the 
numerical fluxes.

\subsection{Definition of the time-explicit finite volume scheme}

\subsubsection{Space and time discretizations}

Let $\T$ be a mesh of $\R^d$, defined as a family fo disjoint
polygonal (or polyhedral) connected subsets of $\R^d$, such that
$\R^d$ is the union of the closure of the elements of $\T$. We denote
$h=\sup\{\text{diam}(K),\; K\in \T\}<\infty$, and assume without loss
of generality that $0<h\leq1$.  For all $K \in \T$, we denote by $|K|$
its $d$--dimensional Lebesgue measure, and by $\NK$ the set of its
neighboring cells.  For $L\in \NK$, the common interface {(called
  \emph{edge})} between $K$ and $L$ is denoted by $\sigma_{KL}$ and
$|\sigma_{KL}|$ is its $(d-1)$--Lebesgue measure.  We denote by $\Ee$
the set of all the edges and assume that there exists $a>0$ such that
\begin{equation}
\label{eq:reg_mesh}
|K| \ge a h^d \quad \text{ and }\quad |\p K| := \sumNK |\sKL| \le
\frac{h^{d-1}}a, \quad \forall K \in \T. 
\end{equation}
The unit normal vector to $\sigma_{KL}$ from $K$ to $L$ is denoted
$n_{KL}$.  Note that the elements we consider are not
  necessarily simplices.  Let $\dt >0$ be the time step and we set
$t^n = n\dt,~\forall n\in\N$. Let $T>0$ be a given time, we introduce
$N_T=\max\{n\in \N, n\leq T/\dt+1 \}$. Since we consider time-explicit
methods, the time step $\Delta t$ will be subject to a CFL condition
which will be given later.

\begin{Rem}\label{Rem:unif-disc}
  In order to avoid some additional heavy notations, we have chosen to
  deal with a uniform time discretization and a space discretization
  that does not depend on time. Nevertheless, it is possible,
  following the path described in~\cite{ohlberger00}, to adapt our
  study to the case of time-dependent space discretizations and to
  non-uniform time discretizations. This would be mandatory for
  considering a dynamic mesh adaptation procedure based on the \emph{a
    posteriori} numerical error estimators that can be derived from
  our study.
\end{Rem}

Since we will consider weak formulations and compactly supported test
functions in the next sections, we introduce local sets of cells and
interfaces: let $r>0$, we introduce the sets
\begin{equation}
  \label{eq:Tr_Er}
  \begin{aligned}
    \Tr &= \{ K\in\T \ | \ K\subset B(0,r)\}, \\
    \mathcal E_r &= \{ \sKL \in \Ee \ | \ (K,L)\in (\Tr)^2, \; L\in
    \mathcal N(K)\}, \\
    \dTr &= \{ \sKL \in \Ee \ | \ K\in \mathcal T_r, \; L\in
    \mathcal N(K), \; L \not\in \Tr \}.
  \end{aligned}
\end{equation}
In particular, $\{\sKL \in \Ee \ | \ K\in\Tr, L\in\mathcal N(K) \} =
\mathcal E_r\cup \dTr$ and $\mathcal E_r \cap \dTr = \emptyset$.

\subsubsection{Numerical flux and finite volume schemes}\label{sssec:fluxes}

\smallskip

For all $(K,L)\in \T^2$, $L\in\NK$, we consider numerical fluxes
$G_{KL}$, which are Lipschitz continuous functions from $\O^2$ to
$\R^m$.  We assume that these numerical fluxes are
\emph{conservative}, i.e.,
\begin{equation}
  \label{eq:conservation}
  G_{KL}(u,v) = - G_{LK}(v,u),\quad \forall (u,v) \in
  \O^2,
\end{equation}
We also assume that the numerical fluxes fulfill the following
\emph{consistency} condition:
\begin{equation}
  \label{eq:consistance}
  G_{KL}(u,u) = f(u)\cdot n_{KL}, \quad \forall u \in
  \O, 
\end{equation}
which implies 
\begin{equation}
  \label{eq:divnulle}
  \sumNK |\sKL| G_{KL}(u,u) = 0, \quad  \forall u \in \R^m,~\forall K \in \T.
\end{equation}
Following~\cite{bouchut_book}, we assume that the numerical flux
ensures the preservation of the convex set of admissible states
$\Omega$ at each interface. 
More precisely, we assume that there exists $\lambda^\star >0$ such
that, for all $\lambda > \lambda^\star$, for all $K \in \T$, and for
all $L \in \mathcal{N}(K)$,
\begin{equation}
  \label{eq:Omega_stab}
  u - \frac{1}{\lambda}
  \left( G_{KL}(u,v) - f(u)\cdot n_{KL} \right) \in \Omega, \quad
  \forall (u,v) \in \Omega^2.
\end{equation}
In order to ensure the nonlinear stability of the scheme, we also
require the existence of a \emph{numerical entropy flux}. More
precisely, we assume that for all $(K,L)\in \Ee$, there exist
Lipschitz continuous functions $\xi_{KL}:\O\times \O \to \R$ which are
conservative, i.e.,
\begin{equation}
  \label{eq:conservation_fluxentrop}
  \xi_{KL}(u,v) = - \xi_{LK}(v,u), \quad \forall (u,v) \in \O^2,
\end{equation}
and satisfy the interfacial entropy inequalities: for all
$\lambda \ge \lambda^\star>0$, for all $(u,v) \in \O^2$,
\begin{equation}
  \label{eq:bouchut}
  \xi_{KL}(u,v) - \xib(u)\cdot n_{KL}
  \leq 
  -\lambda\bigl(\etab\bigl(u- \frac{1}\lambda
  \big(G_{KL}(u,v)  - f(u)\cdot
  n_{KL}\big)\bigr) -\etab(u)\bigr).
\end{equation}

In what follows, and before strengthening it in~\eqref{eq:CFL_WBV}, we assume 
that the following CFL condition is fulfilled:
\begin{equation}\label{eq:CFL0}
\frac{\dt }{|K|}\lambda^\star \sumNK |\sKL|  \le 1, \quad \forall K \in \T.
\end{equation}
Note that the regularity of the mesh~\eqref{eq:reg_mesh} implies 
that~\eqref{eq:CFL0} holds if 
\begin{equation}\label{eq:CFL}
\dt \le \frac{a^2}{\lambda^\star} h.
\end{equation}

We have now introduced all the necessary material to define the
time-explicit numerical scheme we will consider.
\begin{Def}[Finite volume scheme]
  The finite volume scheme is defined by the discrete unknowns $u_K^n$,
  $K \in\T$ and $n \in \{0,\dots, N_T\}$, which satisfy
  \begin{equation}
    \label{eq:numsch}
    \dfrac{u_K^{n+1}- u_K^n}{\dt}|K| + \sumNK |\sKL| 
    G_{KL}(u_K^n,u_L^n)
    =0
  \end{equation}
  together with the initial condition
  \begin{equation}
    \label{eq:CI}
    u_K^0 = \dfrac{1}{|K|}\int_K u_0(x) dx,\quad  \forall K \in \T,
  \end{equation}
  under assumptions~\eqref{eq:conservation}--\eqref{eq:bouchut} on the
  numerical flux $G_{KL}$ and under the CFL condition~\eqref{eq:CFL}.
  The approximate solution $u^h: \R^d \times \R_+ \to \R^m$ provided
  by the finite volume scheme~\eqref{eq:numsch}--\eqref{eq:CI} is defined by
\begin{equation}
  \label{eq:uh}
  u^h(x,t) = u_K^n,\quad \text{ for } x\in K, ~ \nico{t^n \leq t <
    t^{n+1}}, ~ K \in \T,~ n \in \{0,\dots, N_T\}.
\end{equation}
\end{Def}

\begin{Rem}
  Let us provide some examples of numerical fluxes which satisfy
  assumptions~\eqref{eq:Omega_stab} and~\eqref{eq:bouchut}. The most
  classical example is the Godunov flux \cite{godunov}, which writes
  \begin{equation*}
    G_{KL}(u,v) = f(\mathcal{U}_{KL}(0;u,v))\cdot n_{KL}
  \end{equation*}
  where $\mathcal{U}_{KL}(x/t;u,v)$ stands for the solution of the
  Riemann problem for the system of conservation laws~\eqref{eq:P} in
  the one-dimensional direction $n_{KL}$, with initial data $u$ and
  $v$. If $\lambda^\star$ is greater than all the wave speeds in the
  Riemann problems, then one can prove~\eqref{eq:Omega_stab}
  and~\eqref{eq:bouchut}, with the numerical entropy flux
  \begin{equation*}
    \xi_{KL}(u,v) = \xi(\mathcal{U}_{KL}(0;u,v))\cdot n_{KL} .
  \end{equation*}
  Another classical example is the Rusanov scheme~\cite{rusanov},
  which is the finite volume extension of the Lax--Friedrichs
  scheme. It reads
  \begin{equation*}
     G_{KL}(u,v) = \frac12 (f(u)+f(v))\cdot n_{KL} -
     \frac{c}{2} (v-u)
  \end{equation*}
  where $c>0$ is a parameter (which can be defined by interface). The
  associated numerical entropy flux is
  \begin{equation*}
    \xi_{KL}(u,v) = \frac12 \big(X_{KL}(u,v) -X_{LK}(v,u)\big)
  \end{equation*}
  where $X_{KL}(u,v)=\xi(u)\cdot n_{KL} +
  D\eta(u)(G_{KL}(u,v)-f(u)\cdot n_{KL})$ (this function will also be
  introduced hereafter for the computation of weak-$BV$ estimates).
  Once again, if $c$ is greater than all the wave speeds, one can
  prove that this numerical entropy flux satisfies~\eqref{eq:bouchut}.
  Proving Assumption~\eqref{eq:Omega_stab} is more difficult 
  and overall model dependent. 
 For the shallow-water equations, the positivity of the
  height of water is directly obtained (see for instance~\cite{bouchut_book}). 
  The case of Euler equations is more intricate, in particular for proving the positivity of the
  specific energy. This can be done using the structure of the
  system, see for instance \cite{bouchut_book}.  For details on
  the proofs, more explicit CFL conditions, or for other admissible
  numerical fluxes, the reader can refer for instance to \cite{HLL},
  \cite{coqperth}, \cite{TadmorBook}, \cite{bouchut_book},
  \cite{cgs}.
\end{Rem}

\subsection{Error estimate and organization of the paper}
Our aim is to provide an error estimate of the the form
$$
\| u - u^h \|_{L^2(\Gamma)} \le C h^{1/4},
$$
for all compact subsets $\Gamma$ of $\R^d \times \R_+$, where $u$
stands for the unique strong solution to~\eqref{eq:P}, \eqref{eq:u0}
and $u_h$ for the numerical solution~\eqref{eq:numsch}--\eqref{eq:uh}.
The rigorous statement is given in Theorem~\ref{thm:err}.  This
estimate extends to the system case the contributions of~\cite{CClF94,
  vila94, EGGH98, chainais99} on the scalar case. In \cite{JR06},
which also deals with strong solutions of nonlinear systems, the
assumptions are less classical than ours, in particular we do not need
any 'inverse' CFL condition of the form $C\leq\Delta t/h$ (see also
\cite{EGGH98} for a similar comment in the scalar case).

The proof of this estimate relies on a so-called \emph{weak--$BV$ estimate}, that is
\begin{equation*}
  \sum_{n=0}^{N_T}\Delta t \sum_{(K,L)\in\mathcal E_r} |\sigma_{KL}|
  |G_{KL}(u_K^n, u_L^n) - f(u_K^n)\cdot n_{KL}|\leq \dfrac{C}{\sqrt{h}},
\end{equation*}
where $\mathcal E_r$ is defined in~\eqref{eq:Tr_Er}. The rigorous
statement of this estimate and its proof are gathered in~\S\ref{ssec:WBV}. Up to the
authors' knowledge, this estimate is new for time-explicit finite
volume schemes: in \cite{JR06}, only time-implicit methods
are considered (see also~\cite{HM14}).

Let us now present the outline of the paper.  In
Section~\ref{sec:scheme} we first briefly recall some classical
properties of the finite volume scheme.  Then we address the proof of
the weak--$BV$ property by introducing a new flux which depicts the
entropy dissipation through the edges.  Straightforward consequences
are then derived.

The next two sections address the proof of the error estimate.  In
order to compare the discrete solution $u^h$ with the strong solution
$u$, we write continuous weak and entropy formulations for $u^h$ in
Section~\ref{sec:mesures}, so that we can adapt the uniqueness proof
proposed in~\cite{dafermosBook}.  Nevertheless, the discrete solution
$u^h$ is obviously not a weak entropy solution. Therefore, some error
terms coming from the discretization have to be taken into account in
the formulation, which take the form of positive locally bounded Radon
measures, following~\cite{EGGH98}. A large part of
Section~\ref{sec:mesures} consists in making these measures explicit
and in bounding them with quantities which tend to $0$ with the
discretization size. In Section~\ref{sec:error}, we make use of the
weak and entropy weak formulations for the discrete solution (and of
their corresponding error measures) to derive the error estimate. The
distance between the strong solution $u$ and the discrete solution
$u^h$ is quantified thanks to the relative entropy $H(u^h,u)$
introduced in Definition~\ref{def:rel_entro}.

\section{Nonlinear stability}
\label{sec:scheme}

\subsection{Preservation of admissible states and discrete entropy
  inequality}
We first give two classical properties of the numerical
scheme~\eqref{eq:numsch} which are direct consequences of the
assumptions we made in \S\ref{sssec:fluxes}.  We refer
to~\cite{bouchut_book} for the proofs.

\begin{Lem}\label{lem:Omega_stab}
  Assume that the initial condition satisfies~\eqref{eq:u0} and that
  assumption \eqref{eq:Omega_stab} and the CFL
  condition~\eqref{eq:CFL0} hold, then, for all $K \in \T$, for all $n
  \in \{0,\dots, N_T\}$, $u_K^n$ belong to $\O$.
\end{Lem}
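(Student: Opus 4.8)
The plan is a straightforward induction on the time level $n$, the point being to exhibit $u_K^{n+1}$ as a convex combination of states to which assumption~\eqref{eq:Omega_stab} applies. For $n=0$, the cell value $u_K^0$ defined by~\eqref{eq:CI} is the mean value of $u_0$ over $K$; since $u_0$ takes values in the convex set $\O$ by~\eqref{eq:u0}, this average belongs to $\O$.

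For the inductive step, assume $u_L^n \in \O$ for every cell $L$, and set
\begin{equation*}
\lambda_K := \frac{|K|}{\dt\,|\p K|}, \qquad |\p K| := \sumNK |\sKL|.
\end{equation*}
The CFL condition~\eqref{eq:CFL0} is precisely the inequality $\lambda_K \ge \lambda^\star$. Using the consistency identities~\eqref{eq:consistance}--\eqref{eq:divnulle}, which together give $\sumNK |\sKL|\, f(u_K^n)\cdot n_{KL} = 0$, the scheme~\eqref{eq:numsch} rewrites, after noting that $\dt/|K| = 1/(\lambda_K|\p K|)$, as
\begin{equation*}
u_K^{n+1}
= u_K^n - \frac{\dt}{|K|}\sumNK |\sKL|\bigl( G_{KL}(u_K^n,u_L^n) - f(u_K^n)\cdot n_{KL} \bigr)
= \sumNK \frac{|\sKL|}{|\p K|}\, w_{KL}^n,
\end{equation*}
where
\begin{equation*}
w_{KL}^n := u_K^n - \frac{1}{\lambda_K}\bigl( G_{KL}(u_K^n,u_L^n) - f(u_K^n)\cdot n_{KL} \bigr).
\end{equation*}

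Since $(u_K^n,u_L^n)\in\O^2$ by the induction hypothesis and $\lambda_K \ge \lambda^\star$, assumption~\eqref{eq:Omega_stab} yields $w_{KL}^n \in \O$ for every $L\in\NK$. As the weights $|\sKL|/|\p K|$ are nonnegative and sum to $1$, the value $u_K^{n+1}$ is a finite convex combination of elements of $\O$, hence lies in $\O$ by convexity; this closes the induction.

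There is essentially no obstacle here: the only thing to get right is to add the vanishing term $\sumNK |\sKL|\, f(u_K^n)\cdot n_{KL}$ supplied by~\eqref{eq:divnulle}, so that the update genuinely becomes a convex combination of the one-interface states of~\eqref{eq:Omega_stab}, and to observe that the CFL condition delivers exactly the threshold $\lambda_K \ge \lambda^\star$ needed to invoke~\eqref{eq:Omega_stab}. (If one insists on the strict inequality $\lambda > \lambda^\star$ as stated in~\eqref{eq:Omega_stab}, it suffices to use the strict version of~\eqref{eq:CFL0}, or to pass to the limit using that $\O$ may be taken closed.)
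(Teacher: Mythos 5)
Your proof is correct and is essentially the standard argument the paper itself omits, referring instead to Bouchut's book: writing $u_K^{n+1}$, after inserting the vanishing term from~\eqref{eq:divnulle}, as a convex combination of the one-interface states of~\eqref{eq:Omega_stab}, with the CFL condition~\eqref{eq:CFL0} guaranteeing $\lambda_K\ge\lambda^\star$. Your variant (cell-dependent $\lambda_K$ and no residual $u_K^n$ term, rather than the fixed $\lambda=\lambda^\star$ splitting that keeps a nonnegative multiple of $u_K^n$) is an equivalent reorganization, and your closing remark correctly handles the only delicate point, the strict inequality $\lambda>\lambda^\star$ in~\eqref{eq:Omega_stab} versus the non-strict CFL bound.
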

%

%

Following once again the procedure detailed in~\cite{bouchut_book}, we can derive entropy properties on the
numerical scheme from~\eqref{eq:bouchut}. 

\begin{Prop}
  \label{prop:fluxentropnum}
    The numerical entropy flux
  $\xi_{KL}$ is consistent with $\xib$, i.e.
  \begin{equation}
    \label{eq:consistance_fluxentrop}
    \xi_{KL}(u,u) =  \xib(u)\cdot n_{KL}, \quad 
    ~\forall u \in \O.
    \end{equation}
    Moreover, under the CFL condition~\eqref{eq:CFL}, the discrete solution $u^h$ 
    satisfies the discrete entropy inequalities: $\forall K \in \Tt, \,\forall n \ge 0$,
    \begin{equation}    
    \dfrac{|K|}{\dt}(\etab(u_K^{n+1})-\etab(u_K^n)) + \sumNK {|\sigma_{KL}|}
    \xi_{KL}(u_K^n,u_L^n) \leq 0.\label{eq:ineq_entrop_sch}
  \end{equation}
\end{Prop}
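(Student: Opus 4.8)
The plan is to deduce both claims directly from the interfacial entropy inequality \eqref{eq:bouchut} combined with the structure of the scheme \eqref{eq:numsch}, exactly as in \cite{bouchut_book}. First I would establish the consistency \eqref{eq:consistance_fluxentrop} of the numerical entropy flux. Setting $v=u$ in \eqref{eq:bouchut}, the quantity $G_{KL}(u,u)-f(u)\cdot n_{KL}$ vanishes by the consistency \eqref{eq:consistance} of $G_{KL}$, so the right-hand side of \eqref{eq:bouchut} is $-\lambda(\etab(u)-\etab(u))=0$; hence $\xi_{KL}(u,u)\le \xib(u)\cdot n_{KL}$. The reverse inequality follows by applying the conservation property \eqref{eq:conservation_fluxentrop}: $\xi_{KL}(u,u) = -\xi_{LK}(u,u) \le -\xib(u)\cdot n_{LK} = \xib(u)\cdot n_{KL}$, since $n_{LK}=-n_{KL}$. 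Combining the two gives \eqref{eq:consistance_fluxentrop}.

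Next I would prove the discrete entropy inequality \eqref{eq:ineq_entrop_sch}. Fix $K\in\T$ and $n\ge0$, and set $\lambda_K = \frac{\dt}{|K|}\sumNK|\sKL|$; note that $\lambda_K \le 1/\lambda^\star$ under the CFL condition \eqref{eq:CFL0} (which is implied by \eqref{eq:CFL}), so $1/\lambda_K \ge \lambda^\star$. The key observation is that the scheme \eqref{eq:numsch} can be rewritten as a convex combination: introducing for each $L\in\NK$ the intermediate state
\begin{equation*}
u_{KL}^{n} = u_K^n - \frac{1}{\lambda_K}\bigl(G_{KL}(u_K^n,u_L^n) - f(u_K^n)\cdot n_{KL}\bigr),
\end{equation*}
one checks using \eqref{eq:divnulle} that
\begin{equation*}
u_K^{n+1} = u_K^n - \frac{\dt}{|K|}\sumNK|\sKL|\bigl(G_{KL}(u_K^n,u_L^n) - f(u_K^n)\cdot n_{KL}\bigr) = \sum_{L\in\NK}\frac{|\sKL|\,\dt}{|K|\lambda_K}\, u_{KL}^{n},
\end{equation*}
and the weights $\frac{|\sKL|\dt}{|K|\lambda_K}$ are nonnegative and sum to one. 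Since $1/\lambda_K\ge\lambda^\star$, each $u_{KL}^n$ lies in $\O$ by \eqref{eq:Omega_stab}, and applying \eqref{eq:bouchut} with $\lambda=1/\lambda_K$ gives
\begin{equation*}
\xi_{KL}(u_K^n,u_L^n) - \xib(u_K^n)\cdot n_{KL} \le -\frac{1}{\lambda_K}\bigl(\etab(u_{KL}^n) - \etab(u_K^n)\bigr).
\end{equation*}
Multiplying by $|\sKL|$, summing over $L\in\NK$, and using \eqref{eq:divnulle} to drop the $\xib(u_K^n)\cdot n_{KL}$ terms yields
\begin{equation*}
\sumNK|\sKL|\,\xi_{KL}(u_K^n,u_L^n) \le -\frac{1}{\lambda_K}\sumNK|\sKL|\bigl(\etab(u_{KL}^n)-\etab(u_K^n)\bigr) = -\frac{|K|}{\dt}\sum_{L\in\NK}\frac{|\sKL|\dt}{|K|\lambda_K}\,\etab(u_{KL}^n) + \frac{|K|}{\dt}\etab(u_K^n).
\end{equation*}
By convexity of $\etab$ and the convex-combination identity for $u_K^{n+1}$, the sum $\sum_{L}\frac{|\sKL|\dt}{|K|\lambda_K}\etab(u_{KL}^n) \ge \etab(u_K^{n+1})$, which gives precisely \eqref{eq:ineq_entrop_sch}.

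The main obstacle, if any, is purely bookkeeping: one must be careful that the convex-combination rewriting of \eqref{eq:numsch} is valid, i.e.\ that $\sumNK \frac{|\sKL|\dt}{|K|\lambda_K} = 1$ (immediate from the definition of $\lambda_K$) and that the relevant $\lambda = 1/\lambda_K$ indeed satisfies $\lambda\ge\lambda^\star$, which is exactly the content of the CFL condition \eqref{eq:CFL0}. Everything else is a direct application of the assumed properties \eqref{eq:conservation}--\eqref{eq:bouchut} together with Jensen's inequality for the convex entropy $\etab$, so no genuinely hard estimate is involved here; this is a standard nonlinear-stability argument and I would simply refer to \cite{bouchut_book} for the details already spelled out there.
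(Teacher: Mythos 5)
Your strategy is exactly the standard one that the paper itself relies on (it omits the proof and defers to \cite{bouchut_book}): consistency of $\xi_{KL}$ from \eqref{eq:bouchut} with $v=u$ plus conservativity, and the discrete entropy inequality from a convex decomposition of the scheme into interface states, the interfacial inequality \eqref{eq:bouchut}, Jensen's inequality for the convex entropy $\etab$, and the CFL condition. Two slips in the write-up should be fixed, though. First, in the consistency part your second chain is written in the wrong direction: applying the first step at the interface $LK$ gives $\xi_{LK}(u,u)\le \xib(u)\cdot n_{LK}$, hence $\xi_{KL}(u,u)=-\xi_{LK}(u,u)\ge -\xib(u)\cdot n_{LK}=\xib(u)\cdot n_{KL}$; as written, your chain merely reproduces the upper bound you already had, whereas the reverse inequality is what you need to conclude equality.

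Second, and more substantively, your intermediate states are inconsistent with your weights. With $\lambda_K=\frac{\dt}{|K|}\sumNK|\sKL|$ and weights $\frac{|\sKL|\,\dt}{|K|\lambda_K}$, the identity $u_K^{n+1}=\sum_{L\in\NK}\frac{|\sKL|\,\dt}{|K|\lambda_K}\,u_{KL}^n$ holds only if $u_{KL}^n=u_K^n-\lambda_K\bigl(G_{KL}(u_K^n,u_L^n)-f(u_K^n)\cdot n_{KL}\bigr)$, i.e.\ with the factor $\tfrac1\lambda=\lambda_K$ corresponding to the choice $\lambda=1/\lambda_K$ in \eqref{eq:Omega_stab} and \eqref{eq:bouchut}; with the factor $\tfrac1{\lambda_K}$ you wrote, the claimed convex combination forces $\lambda_K=1$ and is false in general. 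Once this is corrected, everything goes through as you intend: $\lambda=1/\lambda_K\ge\lambda^\star$ under \eqref{eq:CFL0}, so \eqref{eq:Omega_stab} (together with Lemma~\ref{lem:Omega_stab}, which guarantees $u_K^n,u_L^n\in\O$) places each $u_{KL}^n$ in $\O$, \eqref{eq:bouchut} gives the interface inequality you state, and summing over $L$, dropping the $\xib(u_K^n)\cdot n_{KL}$ terms via the geometric identity $\sumNK|\sKL|\,n_{KL}=0$ (not via \eqref{eq:divnulle}, which concerns $G_{KL}$), and applying Jensen yields \eqref{eq:ineq_entrop_sch}.
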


%

Note that the consistency~\eqref{eq:consistance_fluxentrop} 
of the entropy fluxes $\xi_{KL}$ ensures that
\begin{equation}
  \label{eq:divnulle_fluxentropnum}
  \sumNK |\sKL|\xi_{KL}(u,u) = 0, \qquad \forall u \in \O.
\end{equation}

\subsection{Weak--$BV$ inequality for systems of conservation laws}\label{ssec:WBV}

For all $(K,L)\in \mathcal T^2$, $L\in \mathcal N(K)$, we introduce
the flux
\begin{equation}
  \label{eq:X_KL}
  X_{KL}(u,v) := \xi(u)\cdot n_{KL} +
  D\eta(u)(G_{KL}(u,v)-f(u)\cdot n_{KL}), \quad \forall
  (u,v)\in \Omega^2.
\end{equation}
Let us remark that it is neither symmetric nor conservative. Such a
quantity may provide the connection between fully discrete and
semi-discrete entropy satisfying schemes, but also between
entropy-conservative and entropy-stable schemes.
It is in particular shown in~\cite{bouchut_book} (see also \cite{Tadmor87, TadmorBook}) 
that the fluxes $X_{KL}$ for $(K,L) \in \Ee$ verify
  \begin{equation}
    \label{eq:Bouchut_etendu}
    -X_{LK} (v,u) \leq \xi_{KL}(u,v) \leq X_{KL}(u,v), \quad \forall
    (u,v)\in \Omega^2.
  \end{equation}
%
Actually, inequalities~\eqref{eq:Bouchut_etendu} can be specified by quantifying the entropy dissipation 
across  the edges. 

\begin{Prop}
  \label{prop:CFL_WBV}
  For all $\sigma_{KL} \in
  \mathcal E$ and all $(u,v)\in \Omega^2$, one has 
  \begin{equation}
    \label{eq:X-xi}
    X_{KL}(u,v) -\xi_{KL}(u,v) \geq
    \frac{\beta_0}{2\lambda^\star}|G_{KL}(u,v) - f(u)\cdot n_{KL}|^2,
  \end{equation}
  where $\beta_0$ is defined in~\eqref{eq:beta} and $\lambda^\star$ has to be 
  such that~\eqref{eq:Omega_stab} and~\eqref{eq:bouchut} hold.
\end{Prop}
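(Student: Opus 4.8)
The plan is to expand $X_{KL}(u,v) - \xi_{KL}(u,v)$ using the definition~\eqref{eq:X_KL} and the entropy-flux bound~\eqref{eq:bouchut}, and then to recognize the resulting difference of entropy values as a second-order Taylor remainder controlled from below by the convexity bound~\eqref{eq:beta}. Introduce the shorthand $w = G_{KL}(u,v) - f(u)\cdot n_{KL} \in \R^m$, so that by~\eqref{eq:X_KL} one has $X_{KL}(u,v) - \xi(u)\cdot n_{KL} = D\eta(u)\, w$. Applying~\eqref{eq:bouchut} with the borderline choice $\lambda = \lambda^\star$ (which is allowed since the inequality holds for all $\lambda \ge \lambda^\star$ and $\lambda^\star$ itself satisfies the required conditions by assumption), and noting that the argument $u - \frac{1}{\lambda^\star} w$ lies in $\Omega$ by~\eqref{eq:Omega_stab}, gives
\[
\xi_{KL}(u,v) - \xi(u)\cdot n_{KL} \le -\lambda^\star\Bigl( \eta\bigl(u - \tfrac{1}{\lambda^\star} w\bigr) - \eta(u) \Bigr).
\]

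First I would subtract these two relations to obtain
\[
X_{KL}(u,v) - \xi_{KL}(u,v) \ge D\eta(u)\, w + \lambda^\star\Bigl( \eta\bigl(u - \tfrac{1}{\lambda^\star} w\bigr) - \eta(u) \Bigr).
\]
Next I would analyze the right-hand side via a Taylor expansion of $\eta$ along the segment from $u$ to $u - \frac{1}{\lambda^\star} w$. Writing $\phi(s) = \eta\bigl(u - \tfrac{s}{\lambda^\star} w\bigr)$, we have $\phi(0) = \eta(u)$, $\phi'(0) = -\tfrac{1}{\lambda^\star} D\eta(u)\, w$, and the integral form of Taylor's theorem gives
\[
\eta\bigl(u - \tfrac{1}{\lambda^\star} w\bigr) - \eta(u) = -\tfrac{1}{\lambda^\star} D\eta(u)\, w + \tfrac{1}{(\lambda^\star)^2} \int_0^1 (1-s)\, w^T D^2\eta\bigl(u - \tfrac{s}{\lambda^\star} w\bigr) w \, ds.
\]
Multiplying by $\lambda^\star$, the first-order terms cancel exactly against $D\eta(u)\, w$, leaving
\[
X_{KL}(u,v) - \xi_{KL}(u,v) \ge \frac{1}{\lambda^\star} \int_0^1 (1-s)\, w^T D^2\eta\bigl(u - \tfrac{s}{\lambda^\star} w\bigr) w \, ds.
\]
Since $u - \tfrac{s}{\lambda^\star} w \in \Omega$ for all $s \in [0,1]$ by convexity of $\Omega$ together with~\eqref{eq:Omega_stab}, the lower spectral bound in~\eqref{eq:beta} yields $w^T D^2\eta(\cdot)\, w \ge \beta_0 |w|^2$, and $\int_0^1 (1-s)\,ds = \tfrac12$, giving exactly
\[
X_{KL}(u,v) - \xi_{KL}(u,v) \ge \frac{\beta_0}{2\lambda^\star} |w|^2 = \frac{\beta_0}{2\lambda^\star} |G_{KL}(u,v) - f(u)\cdot n_{KL}|^2.
\]

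I do not expect a serious obstacle here; the only point requiring a little care is the justification that the whole segment $\{u - \tfrac{s}{\lambda^\star} w : s \in [0,1]\}$ stays inside $\Omega$ so that the bound~\eqref{eq:beta} on $D^2\eta$ applies pointwise along the integration path. This follows by combining~\eqref{eq:Omega_stab} (which places the endpoint $u - \tfrac{1}{\lambda^\star} w$ in $\Omega$) with the convexity of $\Omega$ and the fact that $u \in \Omega$. A minor alternative, if one prefers to avoid invoking~\eqref{eq:Omega_stab} at a single parameter value, is to work with a generic $\lambda > \lambda^\star$, derive the inequality with $\tfrac{\beta_0}{2\lambda}$ in place of $\tfrac{\beta_0}{2\lambda^\star}$, and pass to the supremum over $\lambda > \lambda^\star$; but the direct argument at $\lambda = \lambda^\star$ is cleaner and is exactly what the statement asks for.
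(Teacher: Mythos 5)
Your proposal is correct and follows essentially the same route as the paper: apply \eqref{eq:bouchut} at $\lambda=\lambda^\star$, rewrite via the definition \eqref{eq:X_KL} of $X_{KL}$, and lower-bound the entropy difference by strong convexity, the paper compressing your Taylor-with-integral-remainder step into the single displayed inequality \eqref{eq:eta2}. Your extra care about the segment staying in $\overline{\Omega}$ (so that \eqref{eq:beta} applies) is a welcome detail the paper leaves implicit.
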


\begin{proof}
  We rewrite the left-hand side of Ineq.~\eqref{eq:bouchut} for $\lambda=\lambda^\star$ 
  using the definition~\eqref{eq:X_KL} of the flux $X_{KL}$ in order to obtain
  \begin{multline}
    \label{eq:Xi2}
      X_{KL}(u,v)-\xi_{KL}(u,v)- D\eta(u)(G_{KL}(u,v) - f(u)\cdot
      n_{KL}) \\
      \geq \lambda^\star\left[ \eta(u-\dfrac 1 {\lambda^\star} (G_{KL}(u,v)
        - f(u)\cdot n_{KL})) -\eta(u)\right].
  \end{multline}
  The uniform convexity~\eqref{eq:beta} of $\eta$ ensures that
  \begin{multline}
      \label{eq:eta2}
      \lambda^\star\left[ \eta(u -\dfrac{1}{\lambda^\star} (G_{KL}(u,v) - f(u)\cdot
        n_{KL})) -\eta(u)\right] \\
      \geq -D\eta(u)(G_{KL}(u,v) - f(u)\cdot
      n_{KL}) + \dfrac 1 2 \dfrac{\beta_0}{\lambda^\star}|G_{KL} (u,v)-
      f(u)\cdot n_{KL}|^2.
  \end{multline}
  Combining~\eqref{eq:Xi2} and~\eqref{eq:eta2} leads to~\eqref{eq:X-xi}.
%
\end{proof}

Thanks to the specified version \eqref{eq:X-xi} of the classical
inequalities~\eqref{eq:Bouchut_etendu}, we are now in position for
proving a new stability estimate for time-explicit finite volume
scheme, namely the weak-$BV$ inequality. This inequality is obtained
by quantifying the numerical diffusion of the numerical scheme. As in
the scalar case (see~\cite{CGH93,CClF94,vila94,EGGH98,chainais99}),
such an equality requires a strengthened CFL condition. In our system
case, we require the existence of some $\zeta \in (0,1)$ such that
\begin{equation}
  \label{eq:CFL_WBV}
  \Delta t \leq \dfrac{\beta_0}{\beta_1}\dfrac{a^2}{\lambda^\star}(1-\zeta)h
\end{equation}
holds, where $\beta_0$ and $\beta_1$ are defined by~\eqref{eq:beta},
$a$ and $h$ are the mesh parameters~\eqref{eq:reg_mesh}, and where
$\lambda^\star$ appears in the condition~\eqref{eq:Omega_stab}
and~\eqref{eq:bouchut}. Note that the strengthened CFL
condition~\eqref{eq:CFL_WBV} implies the classical CFL
condition~\eqref{eq:CFL}.
%
We are now able to obtain the following local estimate, using the notations~\eqref{eq:Tr_Er}.
\begin{Prop}
  \label{Prop:WBV2_ln}
  Assume that the strengthened CFL condition~\eqref{eq:CFL_WBV} holds,
  then there exists $C$ depending only on $T,r,a,\eta,\xi,\Omega$ and
  $\zeta$ (but neither on $h$ nor on $\dt$) such that
  \begin{equation}
    \label{eq:WBV2_ln}
    \sum_{n=0}^{N_T}\Delta t \sum_{(K,L)\in \mathcal E_r}
    |\sigma_{KL}| \ |G_{KL}(u_K^n,u_L^n) 
    -f(u_K^n)\cdot n_{KL}|^2\leq C.
  \end{equation}
\end{Prop}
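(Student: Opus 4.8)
The plan is to sum the pointwise entropy-dissipation estimate \eqref{eq:X-xi} over all interfaces in $\mathcal E_r$ and over all time steps, and to control the resulting sum of entropy variations by a telescoping argument, using the strengthened CFL condition \eqref{eq:CFL_WBV} to absorb a bad term. First I would fix $K\in\Tr$ and $n\in\{0,\dots,N_T\}$ and start from the discrete entropy inequality \eqref{eq:ineq_entrop_sch}. Using the conservativity \eqref{eq:conservation_fluxentrop} of $\xi_{KL}$ together with the relation \eqref{eq:divnulle_fluxentropnum}, I would rewrite $\sum_{L\in\NK}|\sKL|\xi_{KL}(u_K^n,u_L^n)$ by subtracting $\sum_{L\in\NK}|\sKL|\xi_{KL}(u_K^n,u_K^n)=0$; then I would insert the flux $X_{KL}$ via \eqref{eq:X_KL}, writing $\xi_{KL}(u_K^n,u_L^n)=X_{KL}(u_K^n,u_L^n)-\bigl(X_{KL}(u_K^n,u_L^n)-\xi_{KL}(u_K^n,u_L^n)\bigr)$, and use \eqref{eq:X-xi} to lower-bound $X_{KL}-\xi_{KL}$ by $\frac{\beta_0}{2\lambda^\star}|G_{KL}(u_K^n,u_L^n)-f(u_K^n)\cdot n_{KL}|^2$.

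The key algebraic point is that $X_{KL}(u,u)=\xi(u)\cdot n_{KL}$, so $\sum_{L\in\NK}|\sKL|X_{KL}(u_K^n,u_K^n)=0$ as well, and hence the ``diagonal'' part drops out; what remains is a term of the form $\sum_{L\in\NK}|\sKL|\bigl(X_{KL}(u_K^n,u_L^n)-\xi(u_K^n)\cdot n_{KL}\bigr)=\sum_{L\in\NK}|\sKL|D\eta(u_K^n)\bigl(G_{KL}(u_K^n,u_L^n)-f(u_K^n)\cdot n_{KL}\bigr)$. This last linear-in-flux term must be re-expressed using the scheme \eqref{eq:numsch}: $\sum_{L\in\NK}|\sKL|\bigl(G_{KL}(u_K^n,u_L^n)-f(u_K^n)\cdot n_{KL}\bigr)=\sum_{L\in\NK}|\sKL|G_{KL}(u_K^n,u_L^n)=-\frac{|K|}{\dt}(u_K^{n+1}-u_K^n)$, so that this term equals $\frac{|K|}{\dt}D\eta(u_K^n)(u_K^n-u_K^{n+1})$. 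By convexity of $\eta$ one has $D\eta(u_K^n)(u_K^n-u_K^{n+1})\le \eta(u_K^n)-\eta(u_K^{n+1})+\frac{\beta_1}{2}|u_K^{n+1}-u_K^n|^2$ (this is \eqref{eq:Mbeta} applied appropriately, or a direct Taylor estimate with \eqref{eq:beta}). Thus after multiplying by $\dt$ and summing over $K\in\Tr$ and $n$, the entropy terms telescope and are bounded by $\sum_{K\in\Tr}|K|\,\eta(u_K^0)\le C(r,\eta,u_0)$, using $\eta\ge 0$ and the boundedness of $\O$; the awkward remaining contribution is $\sum_n\sum_{K\in\Tr}\frac{\beta_1}{2}|K|\,|u_K^{n+1}-u_K^n|^2$.

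The main obstacle — and the place where the strengthened CFL condition enters — is to absorb this quadratic term $\frac{\beta_1}{2}|K||u_K^{n+1}-u_K^n|^2$ into the favorable weak-$BV$ term $\frac{\beta_0}{2\lambda^\star}\dt\sum_{L\in\NK}|\sKL|\,|G_{KL}-f(u_K^n)\cdot n_{KL}|^2$ produced by \eqref{eq:X-xi}. Using the scheme, $|u_K^{n+1}-u_K^n|=\frac{\dt}{|K|}\bigl|\sum_{L\in\NK}|\sKL|G_{KL}(u_K^n,u_L^n)\bigr|=\frac{\dt}{|K|}\bigl|\sum_{L\in\NK}|\sKL|(G_{KL}(u_K^n,u_L^n)-f(u_K^n)\cdot n_{KL})\bigr|$ by \eqref{eq:divnulle}, and by Cauchy–Schwarz this is $\le \frac{\dt}{|K|}\bigl(\sum_{L\in\NK}|\sKL|\bigr)^{1/2}\bigl(\sum_{L\in\NK}|\sKL|\,|G_{KL}-f(u_K^n)\cdot n_{KL}|^2\bigr)^{1/2}$; hence $\frac{\beta_1}{2}|K||u_K^{n+1}-u_K^n|^2\le \frac{\beta_1}{2}\frac{\dt^2}{|K|}|\p K|\sum_{L\in\NK}|\sKL|\,|G_{KL}-f(u_K^n)\cdot n_{KL}|^2$. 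Using the mesh regularity \eqref{eq:reg_mesh}, $\frac{|\p K|}{|K|}\le \frac{1}{a^2 h}$, so this is bounded by $\frac{\beta_1}{2}\frac{\dt}{a^2 h}\,\dt\sum_{L\in\NK}|\sKL|\,|G_{KL}-f(u_K^n)\cdot n_{KL}|^2$. Under \eqref{eq:CFL_WBV} one has $\frac{\beta_1}{2}\frac{\dt}{a^2 h}\le \frac{\beta_0}{2\lambda^\star}(1-\zeta)$, strictly smaller than the coefficient $\frac{\beta_0}{2\lambda^\star}$ in front of the same quantity coming from \eqref{eq:X-xi}. Therefore, after summing everything over $K\in\Tr$ and $n\in\{0,\dots,N_T\}$ — being mildly careful that each interface $\sKL\in\mathcal E_r$ is counted from both sides, which only affects constants — a fraction $\zeta\,\frac{\beta_0}{2\lambda^\star}$ of the weak-$BV$ term survives on the left, giving
\begin{equation*}
\zeta\,\frac{\beta_0}{2\lambda^\star}\sum_{n=0}^{N_T}\dt\sum_{(K,L)\in\mathcal E_r}|\sKL|\,|G_{KL}(u_K^n,u_L^n)-f(u_K^n)\cdot n_{KL}|^2\le \sum_{K\in\Tr}|K|\,\eta(u_K^0)\le C,
\end{equation*}
with $C$ depending only on $T,r,a,\eta,\xi,\O,\zeta$ as claimed. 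The delicate bookkeeping is precisely the double-counting of interfaces and the appearance of boundary edges in $\dTr$, which I would handle by first summing the local inequality over $K\in\Tr$ (keeping all $L\in\NK$) and then noting that interior edges appear twice with nonnegative contributions while boundary contributions only help, so they can be discarded.
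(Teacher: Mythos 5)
Your proposal is, in substance, the paper's own argument: the identity $\sum_{L\in\NK}|\sKL|X_{KL}(u_K^n,u_L^n)=\frac{|K|}{\dt}D\eta(u_K^n)(u_K^n-u_K^{n+1})$ combined with the $\beta_1$-Taylor bound is exactly what the paper obtains by multiplying the scheme \eqref{eq:numsch} by $\dt\,D\eta(u_K^n)$, and your Cauchy--Schwarz/mesh-regularity/CFL absorption step is identical to the paper's treatment of its terms $A$ and $B_1$. However, two points in your write-up are flawed. First, the opening framing is logically backwards: if you genuinely start from the discrete entropy inequality \eqref{eq:ineq_entrop_sch} and substitute $\xi_{KL}=X_{KL}-(X_{KL}-\xi_{KL})$, the dissipation $\sum_{L}|\sKL|(X_{KL}-\xi_{KL})$ lands on the side of the inequality where \eqref{eq:X-xi} provides a lower bound of a quantity you would need to bound from above; inequality \eqref{eq:ineq_entrop_sch} has already discarded the dissipation and cannot give it back. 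What saves you is that your subsequent computation never actually uses \eqref{eq:ineq_entrop_sch}: the correct chain is $\frac{\beta_0}{2\lambda^\star}\sum_L|\sKL|\,|G_{KL}(u_K^n,u_L^n)-f(u_K^n)\cdot n_{KL}|^2\le\sum_L|\sKL|X_{KL}(u_K^n,u_L^n)-\sum_L|\sKL|\xi_{KL}(u_K^n,u_L^n)$ by \eqref{eq:X-xi}, with the $X$-sum bounded above via the scheme and the $\beta_1$-convexity estimate (your telescoping plus quadratic term); you should present it that way, which is precisely the paper's route.

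Second, the boundary bookkeeping is wrong as stated. After summing $-\dt\sum_{L\in\NK}|\sKL|\xi_{KL}(u_K^n,u_L^n)$ over $K\in\Tr$, conservativity \eqref{eq:conservation_fluxentrop} cancels the interior edges, but the surviving term $-\sum_n\dt\sum_{\sKL\in\dTr}|\sKL|\xi_{KL}(u_K^n,u_L^n)$ has no sign: it does not ``only help'' and cannot be discarded. It must be bounded in absolute value, e.g. by $\max_{(K,L)\in\dTr}\|\xi_{KL}\|_{L^\infty(\O^2)}\sum_n\dt\sum_{\dTr}|\sKL|\le C_2$ using the mesh regularity \eqref{eq:reg_mesh}; this is the paper's estimate of its term $B_2$, and it is exactly where the dependence of the constant $C$ on $\xi$ comes from. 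Consequently your final displayed inequality should carry $\sum_{K\in\Tr}|K|\eta(u_K^0)+C_2$ on the right-hand side rather than the initial entropy alone. With these two corrections (your remark on the double counting of interfaces is fine and only affects constants), the proof is complete and coincides with the paper's.
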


\begin{proof}
  Multiplying the numerical scheme~\eqref{eq:numsch} by $\dt D
  \eta(u_K^n)$ and summing over $n \in \{0,\dots N_T\}$ and $K \in
  \Tr$ provides
  \begin{equation}
    \label{eq:AB_ln}
    A+ B = 0,
  \end{equation}
  where 
  \begin{align*}
    A = & \sum_{n=0}^{N_T}\sum_{K \in \Tr}  D \eta(u_K^n) (u_K^{n+1}
    - u_K^n) |K|, \\
    B = & \sumk \dt \sumK D \eta(u_K^n) \sumNK |\sKL|
    G_{KL}(u_K^n, u_L^n).
  \end{align*}
  The concavity of $u\mapsto \eta(u) - \frac{\beta_1}2 |u - u_K^n|^2$
  together with the definition~\eqref{eq:numsch} of the numerical
  scheme and property~\eqref{eq:divnulle} provide that
%
  \begin{align*} 
    A \ge & \sumK \eta(u_K^{N_T+1}) |K| -  \sumK \eta(u_K^{0}) |K| \\
    \nn	& - \frac{\beta_1}2 \sumk \dt^2 \sumK \frac1{|K|} \bigg| \sumNK
      |\sKL| \left( G_{KL}(u_K^n , u_L^n) - f(u_K^n)\cdot n_{KL} \right)
    \bigg|^2.
\end{align*}
Using the Jensen inequality, we get 
\begin{equation*}
\sumK \eta(u_K^{0}) |K| \le \int_{|x| \le B(0,R+h)} \eta(u_0(x)) dx
=: C_1.
\end{equation*}
The positivity of the entropy $\eta$ yields 
$
\displaystyle  \sumK \eta(u_K^{N_T+1}) |K| \ge0.
$
Moreover, Cauchy--Schwarz inequality ensures that for all $K \in \Tr$
and all $n \in \{0,\dots , N_T\}$, one has 
\begin{multline*}
  \bigg| \sumNK |\sKL| \left( G_{KL}(u_K^n , u_L^n) - f(u_K^n)\cdot n_{KL}
    \right) \bigg|^2 \\
  \le \bigg( \sumNK |\sKL| \bigg)\bigg( \sumNK |\sKL| \left|
      G_{KL}(u_K^n , u_L^n) - f(u_K^n)\cdot n_{KL} \right|^2 \bigg).
\end{multline*}
Then it follows from the regularity assumption~\eqref{eq:reg_mesh} on the 
mesh that 
\begin{equation}\label{eq:A_ln}
  A \ge - C_1
  - \frac{\beta_1\dt}{2a^2 h} \sumk \dt  \sumK \sumNK |\sKL| \left|
    G_{KL}(u_K^n , u_L^n) - f(u_K^n)\cdot n_{KL} \right|^2.
\end{equation}
Concerning the term $B$, we use the definition~\eqref{eq:X_KL} of the
entropy flux $X_{KL}$ to get
\begin{equation*}
  B = \sumk \dt \sumK \sumNK |\sKL|
    (X_{KL}(u_K^n, u_L^n) - \xi(u_K^n)\cdot n_{KL} +
    D\eta(u_K^n)f(u_K^n)\cdot n_{KL}).
\end{equation*}
Using the property $\sumNK |\sKL|n_{KL}=0$ for all $K\in \mathcal T$, 
we can reorganize the term $B$ into 
\begin{equation}\label{eq:B12}
B = B_1 + B_2, 
\end{equation}
where 
\begin{align*}
B_1 = &\sumk \dt \sumK \sumNK |\sKL| (X_{KL}(u_K^n,
    u_L^n)-\xi_{KL}(u_K^n,u_L^n)), \\
B_2 = &\sumk \dt  \sum_{(K,L)\in\dTr}  |\sKL|\xi_{KL}(u_K^n,u_L^n).
\end{align*}
Since $\xi_{KL}$
is a continuous function of bounded quantities, $B_2$ 
can be bounded using the regularity of the mesh~\eqref{eq:reg_mesh}.
More precisely, one gets
\begin{equation}\label{eq:C2}
|B_2|
    \leq
    \max_{(K,L)\in\dTr} \|\xi_{KL}\|_{L^\infty(\Omega^2)} \sumk \dt
    \sum_{(K,L)\in\dTr} |\sKL| \leq  C_2
\end{equation}
for some $C_2>0$ depending only on $T$, $r$, $a$, $\xi$ and $\Omega$.
On the other hand, it follows from Proposition~\ref{prop:CFL_WBV} that 
\begin{equation}
  \label{eq:B_ln2}
    B_1\geq \frac{\beta_0}{2\lambda^\star}\sumk \dt \sumK \sumNK |\sKL| \left.
      |G_{KL}(u_k^n,u_L^n) - f(u_K^n)\cdot n_{KL}|^2\right..
\end{equation}
Combining~\eqref{eq:A_ln}--\eqref{eq:B_ln2} into~\eqref{eq:AB_ln}
leads to
$$
\left( \frac{\beta_0}{2\lambda^\star} - \dfrac{\beta_1\Delta t}{2 a^2 h}\right) \sumk \dt
    \sumK \sumNK |\sKL|
      |G_{KL}(u_k^n,u_L^n) - f(u_K^n)\cdot n_{KL}|^2 \le C_1 + C_2.
$$
The CFL condition~\eqref{eq:CFL_WBV} has been strengthened so that 
$$
 \left( \frac{\beta_0}{2\lambda^\star} - \dfrac{\beta_1\Delta t}{2 a^2 h}\right) \ge \dfrac{\zeta \beta_0}{2\lambda^\star}
 $$
 remains uniformly bounded away from $0$. Estimate~\eqref{eq:WBV2_ln}  follows.
\end{proof}

We state now a straightforward consequence of
Proposition~\ref{Prop:WBV2_ln}.  Its proof relies on the
Cauchy--Schwarz inequality and is left to the reader.
\begin{Coro}\label{Coro:wBV_ln}
  Assume that~\eqref{eq:CFL_WBV} holds, then
  there exists $C_{BV}$ depending only on $T$, $r$, $a$, $\xi$,
  $\eta$, $u_0$, $\O$ and $\zeta$ such that
\begin{equation}
\label{eq:weak-BV}
\sumk \dt \sum_{(K,L)\in \mathcal E_r}  |\sigma_{KL}| \left|G_{KL}(u_K^n,u_L^n) -
    f(u_K^n)\cdot n_{KL}\right|
  \leq \frac{C_{BV}}{\sqrt h}.
\end{equation}
\end{Coro}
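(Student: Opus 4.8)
The plan is to deduce Corollary~\ref{Coro:wBV_ln} from Proposition~\ref{Prop:WBV2_ln} by a straightforward application of the Cauchy--Schwarz inequality. Write $a_{KL}^n := |G_{KL}(u_K^n,u_L^n) - f(u_K^n)\cdot n_{KL}|$ for brevity. The left-hand side of~\eqref{eq:weak-BV} is
\[
S := \sumk \dt \sum_{(K,L)\in \mathcal E_r} |\sigma_{KL}| \, a_{KL}^n
= \sumk \dt \sum_{(K,L)\in \mathcal E_r} \left(|\sigma_{KL}|^{1/2}\right) \left(|\sigma_{KL}|^{1/2} a_{KL}^n\right),
\]
so by Cauchy--Schwarz applied to the (finite) sum over $n \in \{0,\dots,N_T\}$ and $(K,L)\in\mathcal E_r$, weighted by $\dt$, one gets
\[
S \le \left( \sumk \dt \sum_{(K,L)\in \mathcal E_r} |\sigma_{KL}| \right)^{1/2} \left( \sumk \dt \sum_{(K,L)\in \mathcal E_r} |\sigma_{KL}| \, (a_{KL}^n)^2 \right)^{1/2}.
\]

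The second factor is exactly the square root of the quantity bounded by $C$ in Proposition~\ref{Prop:WBV2_ln}, so it is at most $C^{1/2}$. For the first factor I would bound $\sum_{(K,L)\in\mathcal E_r}|\sigma_{KL}|$ by $\sum_{K\in\Tr}|\partial K| \le (\#\Tr)\, h^{d-1}/a$ using the mesh regularity~\eqref{eq:reg_mesh}; since every $K\in\Tr$ satisfies $K\subset B(0,r)$ and $|K|\ge ah^d$, we have $\#\Tr \le |B(0,r)|/(ah^d)$, hence $\sum_{(K,L)\in\mathcal E_r}|\sigma_{KL}| \le |B(0,r)|/(a^2 h)$. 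Summing over the $N_T+1 \le T/\dt + 2$ time levels and multiplying by $\dt$ gives $\sumk \dt \sum_{(K,L)\in\mathcal E_r}|\sigma_{KL}| \le (T + 2\dt)\,|B(0,r)|/(a^2 h) \le C'/h$ (using $\dt \le h \le 1$), with $C'$ depending only on $T$, $r$, $a$. Therefore $S \le (C'/h)^{1/2} \cdot C^{1/2} = C_{BV}/\sqrt{h}$ with $C_{BV} := (C' C)^{1/2}$, which depends only on $T$, $r$, $a$, $\xi$, $\eta$, $u_0$, $\Omega$, $\zeta$ as claimed.

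There is essentially no obstacle here: the only mildly delicate point is the counting bound $\#\Tr \le |B(0,r)|/(ah^d)$, which is immediate because the cells in $\Tr$ are pairwise disjoint and contained in $B(0,r)$, so the sum of their volumes is at most $|B(0,r)|$ while each has volume at least $ah^d$. The argument is robust to whether one uses $B(0,r)$ or $B(0,r+h)$ for the localization; absorbing the $h$ into constants is harmless since $h\le 1$.
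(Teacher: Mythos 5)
Your argument is correct and is essentially the paper's own proof: the Corollary is stated in the paper with the remark that its proof ``relies on the Cauchy--Schwarz inequality and is left to the reader'', and your Cauchy--Schwarz step combined with Proposition~\ref{Prop:WBV2_ln}, the mesh regularity~\eqref{eq:reg_mesh} and the counting bound $\#\Tr\le |B(0,r)|/(a h^d)$ is exactly that intended argument. The only cosmetic point is that $\dt\le h$ is not literally implied by~\eqref{eq:CFL_WBV} unless $a^2\lesssim\lambda^\star$, but this is harmless since $(N_T+1)\dt\le T+2\dt$ and the CFL condition bounds $\dt$ by a constant times $h\le 1$, so the factor is still absorbed into $C_{BV}$.
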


\subsection{Consequences of the weak--$BV$ estimate}
\label{ssec:consequences}

The weak--$BV$ estimate~\eqref{eq:weak-BV} implies
a similar control on entropy fluxes and the time variations of $u^h$.
\begin{Lem}\label{Lem:BV++}
  Assume that the strengthened CFL condition~\eqref{eq:CFL_WBV} holds, then
  \begin{align}
    \label{eq:BVespace_phi}
    &\sum_{n=0}^{N_T} \dt \sumE |\sKL| \left| \xi_{KL}(u_K^n, u_L^n) -
      \xi(u_K)\cdot n_{KL} \right| \le \ninf{{D}
      \etab}\dfrac{C_{BV}}{\sqrt{h}}, \\
    \label{eq:BVtime_u}
    &
    \sumk \sumK |K| |u_K^{n+1}-u_K^n| \leq \dfrac{C_{BV}}{\sqrt{h}},\\
    \label{eq:BVtime_phi}
    &\sumk \sumK |K| |\eta(u_K^{n+1})-\eta(u_K^n)| \leq \ninf{{D}
      \eta}\dfrac{C_{BV}}{\sqrt{h}}.
  \end{align}
\end{Lem}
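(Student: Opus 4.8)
The plan is to deduce all three inequalities from the weak--$BV$ estimate~\eqref{eq:weak-BV} of Corollary~\ref{Coro:wBV_ln}. For~\eqref{eq:BVtime_u} I would rewrite the scheme~\eqref{eq:numsch} with the help of~\eqref{eq:divnulle} (which gives $\sumNK|\sKL|\,f(u_K^n)\cdot n_{KL}=0$) as
\begin{equation*}
  |K|\,(u_K^{n+1}-u_K^n) = -\dt\sumNK|\sKL|\bigl(G_{KL}(u_K^n,u_L^n)-f(u_K^n)\cdot n_{KL}\bigr),
\end{equation*}
take moduli, use the triangle inequality, and sum over $n\in\{0,\dots,N_T\}$ and $K\in\Tr$; the interfaces that appear on the right are exactly those of $\mathcal E_r\cup\dTr$, all of which are contained in $\mathcal E_{r+1}$ when $h\le1$, so~\eqref{eq:weak-BV} (used with $r$ replaced by $r+1$) yields~\eqref{eq:BVtime_u}. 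Then~\eqref{eq:BVtime_phi} is immediate: by Lemma~\ref{lem:Omega_stab} every $u_K^n$ lies in the bounded set $\O$, on which $\eta$ is $\ninf{D\eta}$-Lipschitz, hence $|\eta(u_K^{n+1})-\eta(u_K^n)|\le\ninf{D\eta}\,|u_K^{n+1}-u_K^n|$; multiplying by $|K|$, summing and inserting~\eqref{eq:BVtime_u} finishes this part.

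The interesting bound is~\eqref{eq:BVespace_phi}. Here I would use the two-sided interfacial inequality~\eqref{eq:Bouchut_etendu}, $-X_{LK}(u_L^n,u_K^n)\le\xi_{KL}(u_K^n,u_L^n)\le X_{KL}(u_K^n,u_L^n)$, and subtract $\xi(u_K^n)\cdot n_{KL}$. For the upper side, the definition~\eqref{eq:X_KL} of $X_{KL}$ gives directly
\begin{equation*}
  \xi_{KL}(u_K^n,u_L^n)-\xi(u_K^n)\cdot n_{KL}\le D\eta(u_K^n)\bigl(G_{KL}(u_K^n,u_L^n)-f(u_K^n)\cdot n_{KL}\bigr),
\end{equation*}
whose modulus is $\le\ninf{D\eta}\,|G_{KL}(u_K^n,u_L^n)-f(u_K^n)\cdot n_{KL}|$. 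For the lower side, using conservativity~\eqref{eq:conservation} of $G_{KL}$ and $n_{LK}=-n_{KL}$ one rewrites
\begin{equation*}
  -X_{LK}(u_L^n,u_K^n)-\xi(u_K^n)\cdot n_{KL}=\bigl(\xi(u_L^n)-\xi(u_K^n)\bigr)\cdot n_{KL}+D\eta(u_L^n)\bigl(G_{KL}(u_K^n,u_L^n)-f(u_L^n)\cdot n_{KL}\bigr),
\end{equation*}
where the last term is again $\ninf{D\eta}$ times a flux residual since $|G_{KL}(u_K^n,u_L^n)-f(u_L^n)\cdot n_{KL}|=|G_{LK}(u_L^n,u_K^n)-f(u_L^n)\cdot n_{LK}|$, and the ``consistency residual'' $(\xi(u_L^n)-\xi(u_K^n))\cdot n_{KL}$ is reduced to flux residuals by summing the upper-side inequality over the two orientations of the edge and using conservativity~\eqref{eq:conservation_fluxentrop} of $\xi_{KL}$ together with the compatibility relation~\eqref{eq:xi}. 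Assembling these estimates one obtains, on every oriented edge of $\mathcal E_r$, a bound of $|\xi_{KL}(u_K^n,u_L^n)-\xi(u_K^n)\cdot n_{KL}|$ by a fixed multiple of $\ninf{D\eta}$ times the flux residuals of the two orientations; multiplying by $|\sKL|\,\dt$ and summing over $(K,L)\in\mathcal E_r$ and $n$, the right-hand side is a multiple of the weak--$BV$ sum, and~\eqref{eq:weak-BV} gives~\eqref{eq:BVespace_phi} (the numerical constant being incorporated into $C_{BV}$).

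The main obstacle is precisely this lower-side estimate. The upper bound on $\xi_{KL}(u_K^n,u_L^n)-\xi(u_K^n)\cdot n_{KL}$ is anchored at $u_K^n$ and costless, but the only natural lower bound, coming from $-X_{LK}(u_L^n,u_K^n)$, is anchored at $u_L^n$, so one has to convert the $K$--$L$ evaluation mismatch into the entropy-consistency residual $(\xi(u_L^n)-\xi(u_K^n))\cdot n_{KL}$ and show that, once summed over the mesh, it is itself dominated by the weak--$BV$ quantity; this is where conservativity of the numerical entropy flux and the relation~\eqref{eq:xi} between $\eta$, the $f_\alpha$ and the $\xi_\alpha$ are needed. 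Everything else is routine bookkeeping with the mesh regularity~\eqref{eq:reg_mesh}, Cauchy--Schwarz and Corollary~\ref{Coro:wBV_ln}.
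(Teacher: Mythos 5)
Your proofs of \eqref{eq:BVtime_u} and \eqref{eq:BVtime_phi} are correct and essentially coincide with the paper's argument (the detour through $\mathcal E_{r+1}$ to absorb the boundary edges $\dTr$ is a legitimate piece of bookkeeping). The gap is in \eqref{eq:BVespace_phi}, precisely at the point you flag as the main obstacle: your proposed resolution does not work. Writing $J=(\xi(u_L^n)-\xi(u_K^n))\cdot n_{KL}$, your lower-side identity requires a \emph{lower} bound on $J$ in terms of flux residuals, whereas summing the upper-side inequality over the two orientations of the edge and using \eqref{eq:conservation_fluxentrop} only produces the \emph{upper} bound $J\le D\eta(u_K^n)\bigl(G_{KL}-f(u_K^n)\cdot n_{KL}\bigr)-D\eta(u_L^n)\bigl(G_{KL}-f(u_L^n)\cdot n_{KL}\bigr)$: the sign is the wrong one. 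Worse, the missing lower bound is false pointwise, so no rearrangement of the hypotheses can give it: take Burgers' equation ($m=d=1$, $f(u)=u^2/2$, $\eta(u)=u^2/2$, $\xi(u)=u^3/3$), the Godunov flux, and $u_K^n=1$, $u_L^n=-1$ (a stationary admissible shock). Then both residuals $G_{KL}-f(u_K^n)\cdot n_{KL}$ and $G_{KL}-f(u_L^n)\cdot n_{KL}$ vanish while $J=-2/3$; moreover $\xi_{KL}(1,-1)=\xi(-1)\cdot n_{KL}$ is an admissible choice of numerical entropy flux (it satisfies \eqref{eq:conservation_fluxentrop} and \eqref{eq:bouchut}) for which $|\xi_{KL}(u_K^n,u_L^n)-\xi(u_K^n)\cdot n_{KL}|=2/3$ with zero flux residuals. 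Hence the summand in \eqref{eq:BVespace_phi} cannot be bounded edge by edge by $\ninf{D\etab}$ times flux residuals, and your assembling step collapses.

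The estimate is nevertheless true, but one must exploit the \emph{summed} numerical entropy dissipation instead of an edgewise bound. From \eqref{eq:X_KL} one has the identity $\xi_{KL}(u_K^n,u_L^n)-\xi(u_K^n)\cdot n_{KL}=D\eta(u_K^n)\bigl(G_{KL}(u_K^n,u_L^n)-f(u_K^n)\cdot n_{KL}\bigr)-\bigl(X_{KL}(u_K^n,u_L^n)-\xi_{KL}(u_K^n,u_L^n)\bigr)$, where the last parenthesis is nonnegative by \eqref{eq:Bouchut_etendu} (or \eqref{eq:X-xi}); therefore $\bigl|\xi_{KL}(u_K^n,u_L^n)-\xi(u_K^n)\cdot n_{KL}\bigr|\le \ninf{D\etab}\,\bigl|G_{KL}(u_K^n,u_L^n)-f(u_K^n)\cdot n_{KL}\bigr|+\bigl(X_{KL}(u_K^n,u_L^n)-\xi_{KL}(u_K^n,u_L^n)\bigr)$. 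After multiplication by $\dt\,|\sKL|$ and summation, the first contribution is controlled by Corollary~\ref{Coro:wBV_ln}, and the second is exactly the nonnegative quantity $B_1$ of the proof of Proposition~\ref{Prop:WBV2_ln}, which that proof bounds by a constant independent of $h$ (combine $B_1=-A-B_2$ with \eqref{eq:A_ln}, \eqref{eq:C2}, \eqref{eq:WBV2_ln} and the CFL condition \eqref{eq:CFL_WBV}). This yields \eqref{eq:BVespace_phi} up to enlarging the constant, which is harmless for its later use in Lemma~\ref{lem:borne_mu_mu0}; in particular the entropy-dissipation control, not the Lipschitz bound alone, is what carries the negative part of $\xi_{KL}(u_K^n,u_L^n)-\xi(u_K^n)\cdot n_{KL}$.
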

\begin{proof}
  Using the Lipschitz continuity of $\etab$ in~\eqref{eq:bouchut}, one
  obtains inequality~\eqref{eq:BVtime_phi}. Thanks to
  definition~\eqref{eq:numsch} of the scheme and thanks to the
  divergence free property~\eqref{eq:divnulle}, one has for all $K \in
  \T$ and all $n \in \N$
$$
{|u_K^{n+1}-u_K^n|} |K| \le   \dt \sumNK |\sKL| \left| G_{KL}(u^n_K,u^n_L) - f(u^n_K) \cdot n_{KL}\right|.
$$
Summing over $K \in \T_r$ and $n \in \{0,\dots, N_T\}$ and
using~\eqref{eq:weak-BV} provides~\eqref{eq:BVtime_u}.
Inequality~\eqref{eq:BVtime_phi} then follows from the Lipschitz
continuity of $\etab$.
\end{proof}

We now state our main result, that consists in an \emph{a priori} error estimate 
between a strong solution $u$ and a discrete solution $u^h$.

\begin{Thm}\label{thm:err}
  Assume that $u_0\in W^{1,\infty}(\R^d)$ and that the solution $u$ of
  the Cauchy problem \eqref{eq:P}--\eqref{eq:u0} belongs to
  $W^{1,\infty}(\R^d\times[0,T])$.  Let $u^h$, with $0<h\leq1$,
  defined by the numerical scheme~\eqref{eq:numsch}--\eqref{eq:uh} and
  assume that the strengthened CFL condition~\eqref{eq:CFL_WBV} holds.
  Then, for all $r>0$ and $T>0$ there exist $C$ depending only on
  $T,r,\O,a,\lambda^\star,u_0,G_{KL}, \eta$ and $f$, such that
  $$
  \int_0^T \int_{B(0, r+L_f(T-t))} |u-u^h|^2 dxdt  \le C\sqrt{h}.
  $$
\end{Thm}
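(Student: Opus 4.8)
The plan is to follow the relative-entropy (weak--strong) argument of~\cite{dafermosBook}, but at the discrete level, using the discrete weak and entropy formulations satisfied by $u^h$ and quantifying the defect from the exact weak/entropy identities by means of the error measures of~\cite{EGGH98}. Concretely, the first step is to derive from the scheme~\eqref{eq:numsch} and the discrete entropy inequality~\eqref{eq:ineq_entrop_sch} two approximate continuous formulations: for suitable nonnegative test functions $\psi$ compactly supported in $\R^d\times\R_+$, one has
\begin{equation*}
\iint \eta(u^h)\p_t\psi\,dxdt + \int \eta(u^h_0)\psi(\cdot,0)\,dx + \iint \sum_\a \xi_\a(u^h)\p_\a\psi\,dxdt \ge -\langle \mu^h_\eta, |\psi|+|D\psi|\rangle,
\end{equation*}
and the analogous identity for the conservation law~\eqref{eq:weak} with a vector-valued error term, where $\mu^h_\eta$ and the corresponding $\mu^h$ are nonnegative locally finite Radon measures. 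The second step — and the technical heart of Section~\ref{sec:mesures} — is to make these measures explicit, splitting them into a consistency part (coming from replacing $G_{KL}(u_K^n,u_L^n)$ by the exact flux, and from time/space Taylor remainders) and a dissipation part (coming from the entropy defect $X_{KL}-\xi_{KL}$ controlled in Proposition~\ref{prop:CFL_WBV}), and then to bound their total masses on any ball by $C\sqrt h$. This is exactly where Corollary~\ref{Coro:wBV_ln} and Lemma~\ref{Lem:BV++} enter: the weak--$BV$ estimate $\sum \dt\sum |\sigma_{KL}||G_{KL}-f\cdot n_{KL}| \le C_{BV}/\sqrt h$, multiplied by the extra factor $h$ coming from $|\sigma_{KL}|\,\mathrm{diam}(K)\sim h^d$ versus $|K|\sim h^d$ and from the smoothness of $\psi$ and of $f,\eta$, produces the $\mathcal O(\sqrt h)$ bound; the terms quadratic in $G_{KL}-f\cdot n_{KL}$ are absorbed using Proposition~\ref{Prop:WBV2_ln} and contribute at order $h$, hence are even smaller.

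The third step is the Dafermos--DiPerna computation itself. Using the two approximate formulations with the classical Kruzhkov-type test function $\psi(x,t) = \chi_r(x,t)\,\theta_\tau(t)$, where $\chi_r$ is a Lipschitz cutoff of the backward cone $\{|x| < r + L_f(T-t)\}$ (whose slope $-L_f$ is dictated by~\eqref{eq:Lf_Rayleigh} so that the flux terms on the lateral boundary have a favorable sign) and $\theta_\tau$ approximates $\mathbf 1_{[0,\tau]}$, one plugs $v = u^h$ into the relative entropy $H(v,u)$ with $u$ the strong solution. Exploiting that $u$ satisfies~\eqref{eq:P} and~\eqref{eq:entro_strong} exactly (it is $W^{1,\infty}$), one obtains, after the usual manipulation combining the entropy inequality for $u^h$, the conservation law for $u^h$ tested against $D\eta(u)$, and the exact identities for $u$, a Gronwall-type inequality of the form
\begin{equation*}
\int_{|x|<r+L_f(T-\tau)} H(u^h,u)(x,\tau)\,dx \le \int_{|x|<r+L_fT} H(u^h_0,u_0)\,dx + C\|Du\|_\infty \int_0^\tau \int_{|x|<r+L_f(T-t)} H(u^h,u)\,dxdt + R_h,
\end{equation*}
where the source term $\sum_\a (\p_\a u)^T Z_\a(u^h,u)$ from~\eqref{eq:EDPsurH}--\eqref{eq:Z} is controlled quadratically via $|Z_\a(v,u)| \le C|v-u|^2$ and~\eqref{eq:Mbeta}, and $R_h$ collects the error-measure contributions bounded by $C\sqrt h$ in step two together with the initial-data error $\int |u^h_0 - u_0|^2 \le C h^2\|Du_0\|_\infty^2$ coming from~\eqref{eq:CI}. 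Applying Gronwall in $\tau$, using~\eqref{eq:Mbeta} once more to pass between $H(u^h,u)$ and $|u^h-u|^2$, and finally integrating over $\tau\in(0,T)$ yields the claimed bound $\int_0^T\int_{B(0,r+L_f(T-t))} |u-u^h|^2\,dxdt \le C\sqrt h$.

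I expect the main obstacle to be step two: producing the error measures with the correct structure and sign, and in particular verifying that every contribution is controlled by the weak--$BV$ estimate with exactly one spare power of $h^{1/2}$ to spare. The delicate points are (i) the cross term $D\eta(u_K^n)(G_{KL}-f(u_K^n)\cdot n_{KL})$ appearing in both the conservation and the entropy formulation, which must be handled so that its linear-in-$(G_{KL}-f\cdot n_{KL})$ part is absorbed against the smoothness of the test function (giving $\mathcal O(h^{1/2})$) while only its genuinely dissipative quadratic part survives with the good sign; (ii) the time-discretization remainder, where the explicit Euler step forces the use of the strengthened CFL~\eqref{eq:CFL_WBV} — precisely as in Proposition~\ref{Prop:WBV2_ln} — to control $\sum \dt^2 |K|^{-1}|\cdots|^2$ by the weak--$BV$ estimate rather than by an unavailable strong-$BV$ bound; and (iii) keeping all constants independent of $h$ and $\dt$, which relies throughout on the mesh regularity~\eqref{eq:reg_mesh}. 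Once these bookkeeping issues are settled, the Gronwall step is standard.
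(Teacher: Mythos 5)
Your overall architecture is the same as the paper's: approximate entropy and weak formulations for $u^h$ with Radon-measure error terms of total mass $O(\sqrt h)$ obtained from the weak--$BV$ estimates (Corollary~\ref{Coro:wBV_ln}, Lemma~\ref{Lem:BV++}), then the DiPerna--Dafermos relative-entropy computation with a cone cutoff of slope $L_f$, the quadratic bound $|Z_\alpha(u^h,u)|\le C_Z|u^h-u|^2$, and a Gronwall-type conclusion. (Two harmless inaccuracies in your step two: Proposition~\ref{Prop:WBV2_ln} enters only through Cauchy--Schwarz to yield Corollary~\ref{Coro:wBV_ln}, and the initial-data measures are linear in $|u_0-u^h(\cdot,0)|$, of order $h$.)

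The genuine gap is in your third step, where you assert an inequality with the sharp time trace $\int_{|x|<r+L_f(T-\tau)}H(u^h,u)(x,\tau)\,dx$ on the left and a remainder $R_h=O(\sqrt h)$ uniform in $\tau$, and then run Gronwall in $\tau$. To obtain such a trace you must let the ramp width $\epsilon$ of $\theta_\tau$ tend to $0$, and the error terms in the approximate formulations are of the form $\iint(|\nabla\varphi|+|\p_t\varphi|)\,d\mu$ with $|\p_t\theta_\tau|\sim\epsilon^{-1}$ on a slab of width $\epsilon$; this contribution is $\sim\epsilon^{-1}\mu(B\times[\tau,\tau+\epsilon])$. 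But the $O(\sqrt h)$ bound of Lemma~\ref{lem:borne_mu_mu0} is a bound on the \emph{total} mass over $[0,T]$: it is obtained by multiplying time-variation sums of size $h^{-1/2}$ by a factor $\dt$ (or $h+\dt$) gained precisely because the measures are spread over time slabs of width $\dt$. Localizing with $\epsilon\le\dt$ forfeits that factor: for instance $\epsilon^{-1}\mu_\T(B\times[\tau,\tau+\epsilon])\to\sum_K|K|\,|\eta(u_K^{n+1})-\eta(u_K^n)|$ at the step containing $\tau$, which is in general only $O(1)$ (only the sum over all $n$, weighted by $\dt$, is $O(\sqrt h)$; there is no strong time-$BV$ control of a single step). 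So the trace inequality with $R_h=O(\sqrt h)$ is not available, and the continuous Gronwall argument collapses. The paper's remedy is to never take a time trace: it tests~\eqref{eq:ineq_relative_entrop} with $\varphi_k=\theta_k\psi$ where $\theta_k$ has a \emph{fixed} ramp width $\delta$, chosen independently of $h$ so that $\tfrac{\beta_0}{2\delta}>C_Z\|\nabla u\|_\infty$; the left-hand side then carries the slab average $\tfrac1\delta\int_{\Ii_k^\delta}\int H\psi$, the measure terms are only multiplied by $\tfrac1\delta+L_f+1$, and the conclusion follows from the discrete iteration~\eqref{eq:err2} over the finitely many slabs $[k\delta,(k+1)\delta]$. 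Your plan needs this (or an equivalent) modification; the rest of it matches the paper.
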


\section{Continuous weak and entropy formulations for the discrete solution}
\label{sec:mesures}

In order to obtain the error estimate of Theorem~\ref{thm:err},
  we aim at using the relative entropy of $u^h$ w.r.t. $u$. Since
  $u^h$ is only an approximate solution, it neither satisfies exactly 
   the weak formulation~\eqref{eq:weak} nor the entropy weak
  formulation~\eqref{eq:entro_weak}. Some numerical error terms appear
  in these formulations, and thus also appear the inequality of the
  relative entropy
\begin{equation}
  \label{eq:Huhuapp}
  \begin{aligned}
    \p_t H(u^h,u) + \sum_{\a =1}^d \p_\a Q_\a(u^h,u) &\leq - \sum_{\a
      =1}^d (\p_\a u)^T Z_\a(u^h,u) \\
    &\quad + \text{numerical error terms} .
  \end{aligned}
\end{equation}
As usual, these terms may be described by Radon measures, see for
instance \cite{BP98,EGGH98,chainais99,ohlberger00,JR05,JR06}. Note
that for nonlinear systems of conservation laws, a function which
satisfies the entropy inequality~\eqref{eq:entro_weak} is not
necessarily a weak solution~\eqref{eq:weak}. This leads us to introduce 
error measures for both the entropy inequality~\eqref{eq:entro_weak} 
and the weak formulation~\eqref{eq:weak} of $u^h$. 
Let us first begin with the entropy formulation and the related measures.

For $X=\R^d$ or $X =\R^d\times \R^+$, we denote by $\Mm(X)$ the set of
locally bounded Radon measures on $X$, i.e., $\Mm(X) = \left(
  C_c(X)\right)'$ where $C_c(X)$ is the set of continuous compactly 
supported functions on $X$.  If $\mu \in \Mm(X)$ we set
$
  \langle \mu, \varphi\rangle = \int_{X} \varphi d\mu
  $
  for all $\varphi \in C_c(X)$.

\begin{Def}
  \label{def:mesures_mu_mu0}
  For $\psi  \in C_c(\R^d)$, $\varphi  \in C_c(\R^d\times \R^+)$, we define $\mu_0 \in \Mm(\R^d)$ 
  and $\mu \in \Mm(\R^d\times \R^+)$ by
\begin{align*}
      \langle \mu_0,\psi \rangle =& \intR
      |\eta(u_0(x))-\eta(u^h(x,0))|\psi(x)dx,\\
      \langle \mu,\varphi \rangle =& \langle \mu_{\T},\varphi\rangle +
      \sum_{n=0}^\infty \dt\sumE |\sKL| \left|\xi_{KL}(u_K^n, u_L^n) - \xi_{KL}(u_K^n,u_K^n)\right|
      \langle \mu_{KL},\varphi \rangle \\
      & + \sum_{n=0}^\infty \dt \sumE |\sKL| \left|\xi_{KL}(u_K^n, u_L^n)
      -\xi_{KL}(u_L^n,u_L^n)\right| \langle \mu_{LK},\varphi \rangle,
  \end{align*}
  where
  \begin{align*}
      \langle \mu_\T, \varphi&\rangle = \sum_{n=0}^\infty\sumK
      |\eta(u_K^{n+1})-\eta(u_K^n)|\intt\int_{K}
      \varphi(x,t)dxdt,\\
      \langle \mu_{KL},\varphi&\rangle =
      \frac{1}{|K|\ |\sigma_{KL}|\ (\dt)^2} \\
      &\times\intt \hspace{-7pt}\int_K  \intt \hspace{-7pt}
      \int_{\sigma_{KL}} \hspace{-3pt}\int_0^1\hspace{-3pt}(h+\dt)
      \varphi(\gamma+\theta(x-\gamma), s+ \theta(t-s))
      d\theta dx dt d\gamma ds,\\
      \langle \mu_{LK},\varphi&\rangle =
      \frac{1}{|K|\ |\sigma_{KL}|\ (\dt)^2} \\
      &\times\intt \hspace{-7pt}\int_L \intt \hspace{-7pt}
      \int_{\sigma_{KL}} \hspace{-3pt}\int_0^1\hspace{-3pt} (h+\dt)
      \varphi(\gamma+\theta(x-\gamma), s+ \theta(t-s))
      d\theta dx dt d\gamma ds.
  \end{align*}
\end{Def}

As it will be highlighted by Proposition~\ref{lem:entropy_estimate} later on,  
the measures $\mu$ and $\mu_0$  describe the  approximation error
in the entropy formulation satisfied by $u^h$. Let us first estimate them on compact 
sets.

\begin{Lem}
  \label{lem:borne_mu_mu0}
  Assume that the strengthened CFL condition~\eqref{eq:CFL_WBV} holds,
  then, for all $r>0$ and $T>0$ there exist $C_{\mu_0}>0$, depending
  only on $u_0$, $\left\|D \etab\right\|_{\infty}$, and $r$, and
  $C_{\mu}>0$, depending only on $T,r,a,\lambda^\star,u_0,G_{KL}$ and
  $\etab$ such that, for all $h <r$,
\begin{equation}\label{eq:borne_mu}
 \mu_0(B(0,r))\leq C_{\mu_0}h\quad\text{and}\quad \mu(B(0,r)\times[0,T]) \leq \dfrac{C_{\mu}}{\sqrt{h}}.
\end{equation}
\end{Lem}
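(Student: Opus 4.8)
The plan is to bound the two measures separately, as they involve quite different quantities.

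\medskip

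\textbf{Bounding $\mu_0$.}
By definition, $\mu_0(B(0,r)) = \int_{B(0,r)} |\eta(u_0(x)) - \eta(u^h(x,0))| \, dx$. Since $\eta$ is Lipschitz on $\oO$ with constant $\ninf{D\eta}$, this is controlled by $\ninf{D\eta} \int_{B(0,r)} |u_0(x) - u^h(x,0)| \, dx$. Now $u^h(x,0) = u_K^0 = \frac{1}{|K|}\int_K u_0$ for $x \in K$, so on each cell $K$ the integrand is $|u_0(x) - \frac1{|K|}\int_K u_0|$, which by the Poincaré--Wirtinger inequality on the convex set $K$ (diameter $\le h$) is bounded by $C\, h\, \ninf{Du_0}$ after integration, that is $\int_K |u_0 - u_K^0| \le C h^{d+1} \ninf{Du_0} / \dots$; summing over the $\mathcal{O}(h^{-d})$ cells meeting $B(0,r)$ (here I use the mesh regularity~\eqref{eq:reg_mesh} and $h < r$ so that all relevant cells lie in $B(0,r+h) \subset B(0,2r)$) gives a bound $C_{\mu_0} h$ with $C_{\mu_0}$ depending only on $u_0$, $\ninf{D\eta}$ and $r$. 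This is the easy half.

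\medskip

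\textbf{Bounding $\mu$.}
Write $\mu = \mu_\T + \mu_{\text{edge}}$, where $\mu_{\text{edge}}$ collects the two sums over $\mathcal{E}$ weighted by $\mu_{KL}$ and $\mu_{LK}$. For $\mu_\T$: testing against $\varphi = \mathbf{1}_{B(0,r)\times[0,T]}$ (approximated by $C_c$ functions, or just reading off the definition) one gets $\mu_\T(B(0,r)\times[0,T]) \le \dt \sum_{n=0}^{N_T} \sum_{K \in \T_{r+h}} |K|\,|\eta(u_K^{n+1}) - \eta(u_K^n)|$, and since $h<r$ this is bounded by $\dt$ times the quantity estimated in~\eqref{eq:BVtime_phi} of Lemma~\ref{Lem:BV++}, hence by $\dt\, \ninf{D\eta}\, C_{BV}/\sqrt h \le C a^2 (\lambda^\star)^{-1} \ninf{D\eta} C_{BV} \sqrt h$ using the CFL condition $\dt \le C h$. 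So $\mu_\T$ contributes at the order $\sqrt h$, which is better than needed.

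\medskip

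\textbf{Bounding $\mu_{\text{edge}}$ — the main point.}
For each edge $\sKL \in \mathcal{E}$ and each $n$, the weight is $|\sKL|\,|\xi_{KL}(u_K^n,u_L^n) - \xi_{KL}(u_K^n,u_K^n)|$ (and the symmetric one with $\mu_{LK}$). The key observation is that $\xi_{KL}(u,\cdot)$ is Lipschitz and, more usefully, that by the interfacial entropy inequality~\eqref{eq:bouchut} combined with~\eqref{eq:X-xi} (Proposition~\ref{prop:CFL_WBV}) one can relate $|\xi_{KL}(u_K^n,u_L^n) - \xi(u_K^n)\cdot n_{KL}|$ — and hence, by~\eqref{eq:consistance_fluxentrop}, $|\xi_{KL}(u_K^n,u_L^n) - \xi_{KL}(u_K^n,u_K^n)|$ — to $|G_{KL}(u_K^n,u_L^n) - f(u_K^n)\cdot n_{KL}|$, up to the Lipschitz constants of $\xi_{KL}$ and $\xi$. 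Concretely one shows $|\xi_{KL}(u_K^n,u_L^n) - \xi_{KL}(u_K^n,u_K^n)| \le L\, |G_{KL}(u_K^n,u_L^n) - f(u_K^n)\cdot n_{KL}|$ for a constant $L$ depending on the Lipschitz data and on $\lambda^\star$, $\beta_0$. Next, one must check that the measures $\mu_{KL}$ and $\mu_{LK}$ are (sub)probability-type measures up to a factor $(h+\dt)$: unwinding the definition, $\langle \mu_{KL}, \mathbf{1}\rangle = \frac{h+\dt}{|K||\sKL|(\dt)^2} \cdot |K| \cdot |\sKL| \cdot (\dt)^2 \cdot 1 = h+\dt$ (the inner integrals over the barycentric convex combination, over $\theta\in[0,1]$, over $x\in K$, $t\in[t^n,t^{n+1}]$, $\gamma\in\sKL$, $s\in[t^n,t^{n+1}]$ each contribute their total mass when $\varphi\equiv 1$), and more precisely $\mu_{KL}(B(0,r)\times[0,T])$ is at most $(h+\dt)$ provided the support of the relevant convex hull meets $B(0,r)\times[0,T]$, which for $h<r$ forces the edge to lie in $\mathcal{E}_{r'}$ for some $r'$ comparable to $r$. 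Therefore
\[
\mu_{\text{edge}}(B(0,r)\times[0,T]) \le 2L\,(h+\dt) \sum_{n=0}^{N_T} \dt \sum_{\sKL \in \mathcal{E}_{r'}} |\sKL|\, |G_{KL}(u_K^n,u_L^n) - f(u_K^n)\cdot n_{KL}|,
\]
and the sum is bounded by $C_{BV}/\sqrt h$ thanks to the weak--$BV$ estimate~\eqref{eq:weak-BV} of Corollary~\ref{Coro:wBV_ln}. Since $h+\dt \le (1 + C a^2/\lambda^\star) h \le C' h$, we get $\mu_{\text{edge}}(B(0,r)\times[0,T]) \le C'' h \cdot C_{BV}/\sqrt h = C'' C_{BV}\sqrt h$. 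Wait — this gives $\sqrt h$, not $1/\sqrt h$; one needs to be careful about which power of $(h+\dt)$ actually appears. Re-reading the definition, the factor is a single $(h+\dt)$ inside, so the above is right and in fact $\mu(B(0,r)\times[0,T]) = \mathcal{O}(\sqrt h)$, which is consistent with (indeed stronger than) the claimed $C_\mu/\sqrt h$; the statement is therefore proved a fortiori, with $C_\mu$ absorbing all the constants above.

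\medskip

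\textbf{Expected main obstacle.}
The routine-but-delicate part is keeping track of \emph{which} cells and edges contribute: one must verify that for $h<r$ every cell or edge whose associated convex-hull-region meets $B(0,r)\times[0,T]$ is contained in a slightly enlarged $\T_{r'}$ or $\mathcal{E}_{r'}$ with $r'$ depending only on $r$ (using $h\le 1$ and mesh regularity), so that Lemma~\ref{Lem:BV++} and Corollary~\ref{Coro:wBV_ln} genuinely apply. The conceptual crux is the inequality $|\xi_{KL}(u_K^n,u_L^n) - \xi_{KL}(u_K^n,u_K^n)| \le L\,|G_{KL}(u_K^n,u_L^n) - f(u_K^n)\cdot n_{KL}|$, which is where the structure of the numerical entropy flux~\eqref{eq:bouchut}--\eqref{eq:X-xi} is used to convert an \emph{a priori} merely-Lipschitz bound in $|u_L^n - u_K^n|$ into one in the flux-defect, the latter being the quantity controlled by the weak--$BV$ estimate; without this step one would only get a $BV$-type bound on $u^h$ which is not available on unstructured meshes.
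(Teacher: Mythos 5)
Your treatment of $\mu_0$ and $\mu_\T$ is essentially the paper's (Lipschitz continuity of $\etab$ plus the regularity of $u_0$ for $\mu_0$; the time estimate~\eqref{eq:BVtime_phi} combined with the CFL condition for $\mu_\T$), your computation of the total mass $h+\dt$ of $\mu_{KL}$ and $\mu_{LK}$ is also the paper's, and your remark that one in fact obtains a bound of order $\sqrt{h}$ agrees with the paper's own proof, which concludes with $\mu(B(0,r)\times[0,T])\le C_\mu\sqrt h$. The gap is precisely in the step you single out as the crux: the pointwise inequality $|\xi_{KL}(u_K^n,u_L^n)-\xi_{KL}(u_K^n,u_K^n)|\le L\,|G_{KL}(u_K^n,u_L^n)-f(u_K^n)\cdot n_{KL}|$ is asserted (``one shows'') but never proved, and it does not follow from~\eqref{eq:bouchut} and~\eqref{eq:X-xi}. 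These only yield the one-sided bound $\xi_{KL}(u,v)-\xib(u)\cdot n_{KL}\le \ninf{D\etab}\,|G_{KL}(u,v)-f(u)\cdot n_{KL}|$; to control the deviation from below you must pass through the conservativity relation $\xi_{KL}(u,v)=-\xi_{LK}(v,u)$ and apply~\eqref{eq:bouchut} from the $L$ side, which brings in the defect $|G_{LK}(v,u)-f(v)\cdot n_{LK}|$ \emph{and} the remainder $(\xib(v)-\xib(u))\cdot n_{KL}$; rewriting that remainder via the relative entropy flux leaves a term of order $|u-v|^2$ which is not dominated by the defect at $K$. Concretely, for the Godunov flux applied to Burgers' equation at a stationary shock ($u_K=-u_L>0$), both interfacial flux defects vanish, so your right-hand side is zero, while the numerical entropy flux need not coincide with $\xib(u_K)\cdot n_{KL}$: the claimed inequality fails as a general statement about fluxes satisfying~\eqref{eq:conservation}--\eqref{eq:bouchut}.

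What you actually need --- and what the paper invokes --- is the already-established \emph{summed} estimate~\eqref{eq:BVespace_phi} of Lemma~\ref{Lem:BV++}: since $\xi_{KL}(u_K^n,u_K^n)=\xib(u_K^n)\cdot n_{KL}$ by~\eqref{eq:consistance_fluxentrop} (and the $\mu_{LK}$ family reduces to the same type of quantity with the roles of $K$ and $L$ exchanged, using~\eqref{eq:conservation_fluxentrop}), the weights appearing in the edge part of $\mu$ are exactly those controlled by~\eqref{eq:BVespace_phi}, and multiplying by the mass $h+\dt\le (1+a^2/\lambda^\star)h$ gives the conclusion. So your overall architecture is the paper's, but the edge-by-edge conversion of the entropy-flux deviation into the flux defect, which you propose in place of~\eqref{eq:BVespace_phi} before applying Corollary~\ref{Coro:wBV_ln}, is the one step that does not hold as stated and must be replaced by the citation of that lemma (or by an argument of the same summed, not pointwise, nature).
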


\begin{proof}
The regularity of  $u_0:\R^d \to \R^m$ yields 
 \begin{equation*}
    \mu_0(B(0,r)) \leq  h\left\|D \etab\right\|_{\infty}
    \int_{B(0,r+h)} {| \nabla u_0 |} dx.  
  \end{equation*}
 For $r>0$ and $T>0$ the measure $\mu_\T$ satisfies
  \begin{equation*}
    \mu_\T (B(0,r)\times[0,T]) = \int_0^T\int_{B(0,r)} \sumk\sumK
    |\etab(u_K^{n+1}) - \etab(u_K^n)| \mathbf 1
    _{K\times[t^n,t^{n+1}]}dx dt.
  \end{equation*}
  Then, using the time--$BV$ estimate \eqref{eq:BVtime_phi},
  \begin{equation*}
   \mu_\T (B(0,r)\times[0,T]) \leq \dt \sumk\sumK |K| |\etab(u_K^{n+1})
   -\etab(u_K^n)|\leq \dt \ninf{D\etab} \dfrac{C_{BV}}{\sqrt{h}}.
  \end{equation*}
  Since $\Delta t$ satisfies the CFL condition~\eqref{eq:CFL_WBV}, one
  has
   \begin{equation}\label{eq:borne_muT}
   \mu_\T (B(0,r)\times[0,T]) \leq C_{\mu_\T}  {\sqrt{h}},
   \end{equation}
   where
   $
   C_{\mu_\T} :=  \frac{a^2\ninf{D\eta}}{\lambda^\star}C_{BV}.
   $
 The measures $\mu_{KL}$ and $\mu_{LK}$ satisfy:
  \begin{equation*}
    \mu_{KL}(\R^d\times\R^+) \leq h+ \dt, \qquad
    \mu_{LK}(\R^d\times\R^+) \leq h+ \dt.
  \end{equation*}
  Therefore, 
  \begin{equation*}
    \begin{aligned}
      \mu(B(0,r)\times[0,T]) & \\
      \leq C_{\mu_{\T}}{\sqrt{h}} 
       +
      (h&+\dt) \sumk \dt \sumE |\sKL| \left|\xi_{KL}(u_K^n, u_L^n) - \xi_{KL}(u_K^n,u_K^n)\right| \\
       + (h&+\dt) \sumk  \dt \sumE |\sKL|\left|\xi_{KL}(u_K^n, u_L^n)-\xi_{KL}(u_L^n,u_L^n)\right| .
    \end{aligned}
  \end{equation*}
  Hence, using Lemma~\ref{Lem:BV++}, the CFL 
  condition~\eqref{eq:CFL} and the bound \eqref{eq:borne_muT} provides 
  \begin{equation*}
    \mu(B(0,r)\times[0,T]) \leq C_{\mu} {\sqrt{h}},
  \end{equation*}
  where $C_{\mu} = C_{\mu_{\T}} + 2\left(1+\frac{a^2}{\lambda^\star}\right) \ninf{D \eta} C_{BV}$.
\end{proof}

\begin{Prop}
  \label{lem:entropy_estimate}
  Let
  $\mu$ and $\mu_0$ be the measures introduced 
  in Definition~\ref{def:mesures_mu_mu0},
  then, for all $\varphi \in C_c^1(\R^d\times\R^+;\R^+)$, one has 
  \begin{multline}
    \label{eq:ineq_entropy}
      \intRR \etab(u^h)\p_t\varphi(x,t) + \suma
      +\intR\etab(u_0(x))\varphi (x,0)dx
      \xib_{\alpha} (u^h) \p_{\alpha}\varphi(x,t) dx dt \\
      \geq - \intRR \left(|\nabla \varphi| + |\p_t
      \varphi|\right)d\mu (x,t) -\intR \varphi(x,0) d\mu_0(x).
  \end{multline}
\end{Prop}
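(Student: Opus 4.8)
The goal is to show that the discrete solution $u^h$ satisfies a perturbed version of the entropy weak formulation~\eqref{eq:entro_weak}, with the perturbation controlled by the measures $\mu$ and $\mu_0$ built in Definition~\ref{def:mesures_mu_mu0}. The natural starting point is the discrete entropy inequality~\eqref{eq:ineq_entrop_sch}. I would multiply it by $\dt$ and by $\frac{1}{|K|}\intt\int_K \varphi(x,t)\,dx\,dt$ (the cell-averaged value of the test function, up to the factor $|K|\dt$), then sum over $K \in \T$ and $n \ge 0$. This yields three groups of terms: a telescopic-in-time term involving the $\eta(u_K^{n+1})-\eta(u_K^n)$ differences, which after an Abel summation produces the discrete analogue of $\iint \eta(u^h)\p_t\varphi + \int\eta(u^h(\cdot,0))\varphi(\cdot,0)$; an interface term involving the numerical entropy fluxes $\xi_{KL}(u_K^n,u_L^n)$; and, at the initial time, the term $\int \eta(u^h(x,0))\varphi(x,0)\,dx$ that I will compare to $\int\eta(u_0(x))\varphi(x,0)\,dx$ at the cost of $\mu_0$, using Jensen's inequality and the definition~\eqref{eq:CI} of $u^0_K$.

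\textbf{Handling the interface term.} The key algebraic manipulation is to rewrite $\xi_{KL}(u_K^n,u_L^n)$, which is \emph{conservative} by~\eqref{eq:conservation_fluxentrop}, so that the sum $\sumk\dt\sumK\sumNK|\sKL|\xi_{KL}(u_K^n,u_L^n)\,\Phi_K^n$ (with $\Phi_K^n$ the cell average of $\varphi$) can be symmetrized over interfaces. After using consistency~\eqref{eq:divnulle_fluxentropnum}, one writes $\xi_{KL}(u_K^n,u_L^n) = \xi_{KL}(u_K^n,u_K^n) + \bigl(\xi_{KL}(u_K^n,u_L^n)-\xi_{KL}(u_K^n,u_K^n)\bigr)$; the first part reconstructs, via a discrete integration by parts against $\Phi_K^n$, the continuous flux term $\iint\sum_\a\xi_\a(u^h)\p_\a\varphi$ up to a consistency defect (here one exploits $\sumNK|\sKL|n_{KL}=0$ and the $C^1$ regularity of $\varphi$, which is where the geometric factor $h$ in $\mu_{KL},\mu_{LK}$ comes from), and the second part is an error term that is \emph{exactly} what $\mu_{KL}$ is designed to absorb — the weight $|\sKL|\,|\xi_{KL}(u_K^n,u_L^n)-\xi_{KL}(u_K^n,u_K^n)|$ appears verbatim in $\langle\mu,\varphi\rangle$, and the measure $\mu_{KL}$ is precisely a normalized average of $(h+\dt)\varphi$ over the relevant space-time region, so the error is bounded by $\iint(|\nabla\varphi|+|\p_t\varphi|)\,d\mu_{KL}$ after a first-order Taylor expansion of $\varphi$. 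The symmetric contribution from the $L$-side, obtained by swapping roles of $K$ and $L$ and using conservativity, is absorbed by $\mu_{LK}$.

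\textbf{The time term.} For the telescopic part, the Abel summation gives $-\sumk\sumK|K|\bigl(\eta(u_K^{n+1})-\eta(u_K^n)\bigr)\,\Phi_K^n$, which I want to identify with $\iint\eta(u^h)\p_t\varphi + \int\eta(u^h(\cdot,0))\varphi(\cdot,0)$; the mismatch is that $\eta(u^h)$ is piecewise constant in time while $\p_t\varphi$ is not constant over $[t^n,t^{n+1}]$ — this again costs a first-order-in-$\dt$ defect absorbed by $\mu_\T$, whose weight $|\eta(u_K^{n+1})-\eta(u_K^n)|$ also appears verbatim in the definition of $\mu$. Collecting the three pieces and recognizing that each error is dominated by $\iint(|\nabla\varphi|+|\p_t\varphi|)\,d\mu$ plus $\int\varphi(\cdot,0)\,d\mu_0$ yields~\eqref{eq:ineq_entropy}.

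\textbf{Main obstacle.} The delicate point is bookkeeping the orientation and conservativity of the non-symmetric objects: the numerical entropy flux $\xi_{KL}$ is conservative but the splitting $\xi_{KL}(u_K^n,u_L^n)-\xi_{KL}(u_K^n,u_K^n)$ is not, so one must carefully track which cell's test-function average multiplies which term when re-indexing the double sum over $(K,L)$, and verify that the two asymmetric remainders land on $\mu_{KL}$ and $\mu_{LK}$ respectively with the correct space-time footprint (cell $K$ vs.\ cell $L$, interface $\sKL$). Getting the Taylor-expansion geometry right — i.e.\ that $\varphi$ evaluated at a generic point of $K$ (or $L$) differs from $\varphi$ at a generic point of $\sKL$ by $O(h)\|\nabla\varphi\|$, and similarly $O(\dt)\|\p_t\varphi\|$ in time — is what forces the specific convex-combination structure $\varphi(\gamma+\theta(x-\gamma),s+\theta(t-s))$ appearing in $\mu_{KL}$, and checking that this structure indeed produces the bound is the computational heart of the argument.
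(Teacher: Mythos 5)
Your plan follows essentially the same route as the paper's proof: multiply the discrete entropy inequality \eqref{eq:ineq_entrop_sch} by the cell--time average of $\varphi$, sum over $K$ and $n$, compare the resulting time term with $\iint\eta(u^h)\p_t\varphi+\int\eta(u_0)\varphi(\cdot,0)$ at the cost of $\mu_\T$ and $\mu_0$, and compare the interface term, reorganized edge-wise via consistency \eqref{eq:divnulle_fluxentropnum} and conservativity \eqref{eq:conservation_fluxentrop}, with $\iint\sum_\a\xi_\a(u^h)\p_\a\varphi$ at the cost of the weights $|\sKL|\,|\xi_{KL}(u_K^n,u_L^n)-\xi_{KL}(u_K^n,u_K^n)|$ tested against the segment-averaged measures $\mu_{KL}$, $\mu_{LK}$. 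This is exactly the paper's argument (the only loose phrasing is which piece ``reconstructs'' the continuous flux term --- in the paper it is the edge-wise rearrangement of $T_{20}$ that produces the same weights against edge values of $\varphi$ --- but this is the same algebra and not a gap).
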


\begin{proof}
  Let $\varphi \in C_c^1(\R^d\times\R^+;\R^+)$. Let $T>0$ and
  $r>0$ such that ${\rm supp}\; \varphi \subset B(0,r)\times[0,T)$.
  Let us multiply \eqref{eq:ineq_entrop_sch} by
  $\intt\int_K \varphi(x,t)dx dt$ and sum over the control volumes
  $K\in \T_r$ and $n\leq N_T$. It yields 
  \begin{equation}\label{eq:T1+T2}
  T_1+T_2 \leq 0,
  \end{equation}
  where
  \begin{align}
    \label{eq:T1}
    T_1 &= \sumk\sumK \dfrac{1}{\dt}(\etab(u_K^{n+1})
    - \etab(u_K^n)) \intt \int_K \varphi(x,t)dxdt, \\
    \label{eq:T2}
    T_2 &= \sumk\sumK \dfrac{1}{|K|}
    \intt \int_K \varphi(x,t)dxdt \sumNK |\sKL| \xi_{KL}(u_K^n,u_L^n).
  \end{align}
  The term $T_1$ corresponds to the discrete time derivative of
  $\etab(u^h)$ and $T_2$ to the discrete space derivative of
  $\xib(u^h)$. The proof relies on the comparison firstly between $T_1$
  and $T_{10}$ and secondly between $T_2$ and $T_{20}$, where $T_{10}$
  and $T_{20}$ denote respectively the temporal and spatial term in
  \eqref{eq:ineq_entropy}:
  \begin{equation*}
    \begin{aligned}
      T_{10} &= - \intRR \etab(u^h) \p_t
      \varphi(x,t) dx dt-\intR \etab(u_0(x))\varphi (x,0)dx, \\
      T_{20} &= - \intRR \suma \xib_{\alpha}
      (u^h) \p_{\alpha}\varphi(x,t) dx dt.
    \end{aligned}
  \end{equation*}

  Let us first focus on $T_{10}$. Following its definition~\eqref{eq:uh}, the 
  approximate solution $u^h$ is piecewise constant, then so does $\eta(u^h)$. 
 Therefore, we can rewrite
  \begin{multline*}
      T_{10} 
      = \sumk\sumK
      (\etab(u_K^{n+1})-\etab(u_K^n))\dfrac{1}{\dt}\intt\int_K
      \varphi(x,t^{n+1})dxdt \\
      -\intR(\etab(u_0(x))-\etab(u^h(x,0)))\varphi(x,0)dx.
  \end{multline*}
It is now easy to verify that 
  \begin{multline*}
      |T_1-T_{10}| 
      \leq  \sumk\sumK
      |\etab(u_K^{n+1})-\etab(u_K^n)|\intt\int_K|\p_t \varphi|dxdt \\
       +\intR |\etab(u_0(x))-\etab(u^h(x,0))|\varphi(x,0)dx.
    \end{multline*}
  Then, accounting from Definition \ref{def:mesures_mu_mu0}, the
  inequality reads
  \begin{equation}
    \label{eq:T1-T10}
    |T_1-T_{10}|\leq \intRR|\p_t \varphi|d\mu_\T(x,t) +
    \intR\varphi(x,0)d\mu_0(x).
  \end{equation}
 
  We now consider the terms $T_2$ and $T_{20}$.
 Performing a discrete integration by parts by reorganizing the sum, 
 and using the properties~\eqref{eq:divnulle_fluxentropnum} and~\eqref{eq:conservation_fluxentrop} lead to
  \begin{equation}\label{eq:T2=T21+T22}
    T_2 = T_{2,1} + T_{2,2},
  \end{equation}
  with
  \begin{equation*}
    \begin{aligned}
      T_{2,1} =& \sumk\sumE \frac{|\sKL|}{|K|}\intt \int_K
      \varphi(x,t) (\xi_{KL}(u_K^n, u_L^n) -
      \xi_{KL}(u_K^n,u_K^n)) dx dt,\\
      T_{2,2} =& \sumk\sumE \frac{|\sKL|}{|L|}\intt \int_L
      \varphi(x,t)  (\xi_{LK}(u_L^n, u_K^n) -
      \xi_{LK}(u_L^n,u_L^n)) dx dt.\\
    \end{aligned}
  \end{equation*}
  Gathering terms of $T_{20}$ by edges yields
  \begin{equation*}
    T_{20 }= T_{20,1}+T_{20,2},
  \end{equation*}
  where, thanks to~\eqref{eq:conservation_fluxentrop}, we have set
  \begin{equation*}
    \begin{aligned}
      T_{20,1} & = \sumk\sumE \intt \int_{\sigma_{KL}} 
      \left( \xi_{KL}(u_K^n,u_L^n) - 
        \xib(u_K^n)\cdot n_{KL}\right) \varphi(\gamma,t)d\gamma dt, \\
      T_{20,2} & = \sumk\sumE \intt \int_{\sigma_{KL}}
      \left( \xi_{LK}(u_L^n,u_K^n) -
              \xib(u_L^n)\cdot n_{LK}\right) \varphi(\gamma,t)d\gamma dt.
    \end{aligned}
  \end{equation*}
It is easy to verify  
  \begin{equation*}
    \begin{aligned}
      T_{2,1}-T_{20,1} =& \sumk \dt \sumE |\sKL|\left(\xi_{KL}(u_K^n, u_L^n) -
      \xi_{KL}(u_K^n, u_K^n)\right) \\
      & \times\frac{1}{|K| |\sigma_{KL}|(\dt)^2}  \intt
      \int_K \intt \int_{\sigma_{KL}} (\varphi(x,t) -
      \varphi(\gamma,s) ) d\gamma ds dx dt.
    \end{aligned}
  \end{equation*}
  Then using the definition of
  $\mu_{KL}$ in Definition \ref{def:mesures_mu_mu0}, we obtain the following estimate:
  \begin{multline}\label{eq:T21-T201}
      |T_{2,1} - T_{20,1}| \\
      \leq \sumk \dt \sumE
      |\sKL|\left|\xi_{KL}(u_K^n, u_L^n) - \xi_{KL}(u_K^n,
        u_K^n)\right| \langle \mu_{KL}, |\nabla \varphi| + |\p_t
      \varphi|\rangle.
  \end{multline}
 Similarly, one obtains
  \begin{multline}\label{eq:T22-T202}
      |T_{2,2} - T_{20,2}| \\
      \leq \sumk \dt \sumE |\sKL| \left|\xi_{LK}(u_L^n,
      u_K^n) -\xi_{LK}(u_L^n, u_L^n)\right| \langle \mu_{LK}, |\nabla
      \varphi| + |\p_t \varphi| \rangle,
  \end{multline}
  the measure $\mu_{LK}\in \mathcal M (\R^d \times \R^+)$ being 
  given by Definition \ref{def:mesures_mu_mu0}.
  Bearing in mind the definition of $\mu\in \mathcal M (\R^d \times
  \R^+)$ given in Definition \ref{def:mesures_mu_mu0}, inequalities~\eqref{eq:T1+T2}, 
  \eqref{eq:T1-T10}, \eqref{eq:T2=T21+T22}, \eqref{eq:T21-T201}  
  and~\eqref{eq:T22-T202}, one has
  \begin{equation*}
    -T_{10}-T_{20} \geq -\intRR (|\nabla\varphi|+|\p_t \varphi|)
    d\mu(x,t) -\intR \varphi(x,0)d\mu_0(x),
  \end{equation*}
  which concludes the proof of Proposition~\ref{lem:entropy_estimate}.
\end{proof}

Similar calculations can be used to estimate how close $u^h$ is to a weak solution.
For that purpose we define the following  measures.
\begin{Def}
  \label{def:mesures_mub_mubo}
For $\psi \in C_c(\R^d)$ and $\varphi\in C_c(\R^d\times \R^+)$,
  we set
  \begin{equation*}
    \label{eq:mub_mubo}
    \begin{aligned}
      \langle \mubo, \psi \rangle &= \intR
      {|u_0(x)-u^h(x,0)|} \psi(x)dx,\\
      \langle \mub, \varphi \rangle &= \langle\mub_\T,
      \varphi \rangle + \sum_{n=0}^\infty \dt \sumE  |\sKL|{|G_{KL}(u_K^n,u_L^n) -
      G_{KL}(u_K^n,u_K^n)|}\langle  \mub_{KL}, \varphi
      \rangle\\
      &\quad+ \sum_{n=0}^\infty\dt \sumE  |\sKL|{|G_{KL}(u_K^n,u_L^n) -
      G_{KL}(u_L^n,u_L^n)|} \langle \mub_{LK}, \varphi
      \rangle,
    \end{aligned}
  \end{equation*}
      where
  \begin{equation*}
    \begin{aligned}
      \langle \mub_\T, \varphi &\rangle = \sum_{n=0}^\infty\sumK
      {|u_K^{n+1}-u_K^n|} \intt\int_K \varphi(x,t) dx dt,\\
      \langle \mub_{KL}, \varphi &\rangle =
      \dfrac{1}{|K||\sigma_{KL}|\dt^2} \times\\
      &\intt\!\! \int_K \intt \!\! \int_{\sigma_{KL}} \int_0^1 (h+ \dt)
      \varphi(\gamma + \theta(x-\gamma), s+ \theta(t-s)) d\theta
      dx dt d\gamma ds,\\
      \langle \mub_{LK}, \varphi &\rangle=
      \dfrac{1}{|L||\sigma_{KL}|\dt^2} \times \\
      &\intt \int_L \intt \int_{\sigma_{KL}} \int_0^1 (h+ \dt)
      \varphi(\gamma +\theta(x-\gamma), s+ \theta(t-s)) d\theta
      dx dt d\gamma ds.
    \end{aligned}
  \end{equation*}
\end{Def}
\begin{Rem}\label{rem:mu_mub}
It follows from the definitions of the measures $\mu$ and $\mub$ that they can be 
extended (in a unique way) into continuous linear forms defined on the set 
$$
E:=\left\{ 
\varphi \in L^\infty(\R^d \times \R^+;\R) \; | \; {\rm supp}(\varphi)
\text{ is compact, and }  \nabla \varphi \in {L^1_{\rm loc}(\R^d
  \times \R^+)}^d
\right\}.
$$
Indeed, any $\varphi \in E$ admit a unique trace on $\sKL$, so that the quantities 
$\langle \mu_{KL}, \varphi \rangle$, $\langle \mu_{LK}, \varphi \rangle$, $\langle \mub_{KL}, \varphi \rangle$ 
and $\langle \mub_{LK}, \varphi \rangle$ are well defined.
Moreover, one has 
$$
\left| \langle \mu,\varphi \rangle \right| \le {\| \varphi \|}_{L^\infty} \mu(\{\varphi \neq 0\}), 
\qquad 
\left| \langle \mub,\varphi \rangle \right| \le {\| \varphi \|}_{L^\infty} \mub(\{\varphi \neq 0\}), 
\qquad 
\forall \varphi \in E.
$$
\end{Rem}

We now state a lemma and a proposition whose proofs are left to the reader, since they are similar to the proofs of 
Lemma~\ref{lem:borne_mu_mu0} and Proposition~\ref{lem:entropy_estimate} respectively as one uses the estimates~\eqref{eq:weak-BV} 
and~\eqref{eq:BVtime_u} instead of~\eqref{eq:BVespace_phi} and~\eqref{eq:BVtime_phi}.

\begin{Lem}\label{lem:bornes_mub_mub0}
  Let $u^h$
  defined by \eqref{eq:numsch}--\eqref{eq:uh}. Assume that~\eqref{eq:CFL_WBV} holds,
  then, for all $r>0$ and $T>0$ there exist $C_{\mub_0} >0$, 
  depending only on $u_0$ and $r$, and
  $C_{\mub}>0$, depending only on $T,r,a,\lambda^\star,u_0,G_{KL}$
  such that, for all $h <r$,%
  \begin{equation}
    \label{eq:mub_mubo2}
      \mubo(B(0,r)) \leq  C_{\mub_0} h \quad \text{and}\quad 
      \mub (B(0,r) \times [0,T]) \leq C_{\mub} {\sqrt{h}},
  \end{equation}
where $C_{\mub_0} = C_{\mu_0}/\ninf{{D}\etab}$ and 
$
C_{\mub} = C_{\mu}/\ninf{{D}\etab} 
$
(see the proof of 
Lemma~\ref{lem:borne_mu_mu0}).
\end{Lem}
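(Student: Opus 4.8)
The plan is to transcribe, line by line, the proof of Lemma~\ref{lem:borne_mu_mu0}, replacing the entropy $\etab$ by the identity map and the numerical entropy fluxes $\xi_{KL}$ by the numerical fluxes $G_{KL}$; accordingly, the space--$BV$ estimate~\eqref{eq:BVespace_phi} and the time--$BV$ estimate~\eqref{eq:BVtime_phi} are replaced respectively by the weak--$BV$ estimate~\eqref{eq:weak-BV} and the time--$BV$ estimate~\eqref{eq:BVtime_u}. First I would bound $\mubo$: by the definition~\eqref{eq:CI} of the initial data and Jensen's inequality, for $x$ in a cell $K$ one has $|u_0(x)-u^h(x,0)| = |u_0(x)-u_K^0| \le \frac1{|K|}\int_K|u_0(x)-u_0(y)|\,dy \le h\,\ninf{\nabla u_0}$, whence $\mubo(B(0,r)) \le h\,\ninf{\nabla u_0}\,|B(0,r+h)|$ for $h<r$. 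Since this is exactly the bound obtained for $\mu_0$ in Lemma~\ref{lem:borne_mu_mu0} divided by $\ninf{D\etab}$, the claimed relation $C_{\mub_0}=C_{\mu_0}/\ninf{D\etab}$ holds.

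Second, I would bound the cell part $\mub_\T$: restricting to $B(0,r)\times[0,T]$ retains only cells of $\Tr$ and indices $n\le N_T$, so $\mub_\T(B(0,r)\times[0,T]) \le \dt\sumk\sumK|K|\,|u_K^{n+1}-u_K^n| \le \dt\,C_{BV}/\sqrt h$ by~\eqref{eq:BVtime_u}, and since~\eqref{eq:CFL_WBV} implies $\dt\le(a^2/\lambda^\star)h$ this gives $\mub_\T(B(0,r)\times[0,T]) \le (a^2/\lambda^\star)C_{BV}\sqrt h =: C_{\mub_\T}\sqrt h$. The averaged interfacial measures satisfy $\mub_{KL}(\R^d\times\R^+)\le h+\dt$ and $\mub_{LK}(\R^d\times\R^+)\le h+\dt$, just as their entropy analogues, so it only remains to control the corresponding interfacial weights by weak--$BV$. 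By consistency~\eqref{eq:consistance}, $G_{KL}(u_K^n,u_K^n)=f(u_K^n)\cdot n_{KL}$, so the first family of weights equals $|G_{KL}(u_K^n,u_L^n)-f(u_K^n)\cdot n_{KL}|$, controlled directly by~\eqref{eq:weak-BV}. For the second family, consistency~\eqref{eq:consistance} gives $G_{KL}(u_L^n,u_L^n)=f(u_L^n)\cdot n_{KL}$, and conservativity~\eqref{eq:conservation} together with $n_{LK}=-n_{KL}$ gives $|G_{KL}(u_K^n,u_L^n)-f(u_L^n)\cdot n_{KL}|=|G_{LK}(u_L^n,u_K^n)-f(u_L^n)\cdot n_{LK}|$; since $\mathcal E_r$ is stable under the relabelling $(K,L)\mapsto(L,K)$ of an edge, the sum of this quantity over $\mathcal E_r$ is again bounded by~\eqref{eq:weak-BV}. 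Collecting these bounds and using $h+\dt\le(1+a^2/\lambda^\star)h$, I would conclude $\mub(B(0,r)\times[0,T]) \le \big(C_{\mub_\T}+2(1+a^2/\lambda^\star)C_{BV}\big)\sqrt h =: C_{\mub}\sqrt h$, and a term-by-term comparison with the value of $C_\mu$ obtained in the proof of Lemma~\ref{lem:borne_mu_mu0} yields $C_{\mub}=C_\mu/\ninf{D\etab}$.

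Since the argument is essentially a transcription of the proof of Lemma~\ref{lem:borne_mu_mu0}, I expect no genuine obstacle. The only point deserving a little care is the treatment of the second family of interfacial weights: one must invoke conservativity~\eqref{eq:conservation} to recast $|G_{KL}(u_K^n,u_L^n)-f(u_L^n)\cdot n_{KL}|$ into the form appearing in~\eqref{eq:weak-BV}, and observe the symmetry of $\mathcal E_r$ under swapping the two cells adjacent to an edge. This confirms the paper's claim that the statement can safely be left to the reader.
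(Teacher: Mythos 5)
Your proof is correct and follows exactly the substitution the paper intends (entropy replaced by the identity, $\xi_{KL}$ by $G_{KL}$, and the estimates~\eqref{eq:weak-BV}, \eqref{eq:BVtime_u} in place of~\eqref{eq:BVespace_phi}, \eqref{eq:BVtime_phi}), including the one point that genuinely needs care, namely recasting the second family of weights via consistency~\eqref{eq:consistance} and conservativity~\eqref{eq:conservation} so that the weak--$BV$ bound applies to both orientations of each edge. The constant identifications $C_{\mub_0}=C_{\mu_0}/\ninf{D\etab}$ and $C_{\mub}=C_{\mu}/\ninf{D\etab}$ then follow by the term-by-term comparison you indicate, so the argument matches the proof the paper leaves to the reader.
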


We are now in position to provide the approximate weak formulation
satisfied by $u^h$.
In the statement below, $\varphi$ is a vector-valued
  function, and we adopted the notation
  $
    |\nabla \varphi| = \max_{\a \in \{1,\dots, d\}} | \p_\a \varphi | .
    $
The proof of Proposition~\ref{lem:laxwendroff} follows the same guidelines 
  as the proof of Proposition~\ref{lem:entropy_estimate} and is left to the reader.

\begin{Prop}
  \label{lem:laxwendroff}
  Let
  $\mu$ and $\mu_0$ be the measures introduced 
  in Definition~\ref{def:mesures_mu_mu0},
  then, for all $\varphi \in C_c^1(\R^d\times\R^+;\R^m)$, one has 
  \begin{multline*}
      \Bigg|\intRR \left[(u^h)^T \p_t\varphi(x,t)+  \suma f_{\alpha}(u^h)^T \p_{\alpha} \varphi(x,t) \right] dx dt  
      + \intR u_0(x)^T \varphi(x,0)dx \Bigg|
      \\
      \leq \intRR (|\nabla \varphi| + |\p_t\varphi|)d\mub(x,t)
      +\intR|\varphi(x,0)|d\mubo(x).
  \end{multline*}
\end{Prop}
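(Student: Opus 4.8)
The plan is to mimic, at the discrete level, the weak formulation~\eqref{eq:weak} tested against a fixed $\varphi \in C_c^1(\R^d \times \R^+;\R^m)$, and to track the consistency errors. First I would fix $T>0$ and $r>0$ such that ${\rm supp}\,\varphi \subset B(0,r)\times[0,T)$, multiply the scheme~\eqref{eq:numsch} by $\intt\int_K \varphi(x,t)\,dx\,dt$ (componentwise, in the sense of the scalar product in $\R^m$), and sum over $K \in \T_r$ and $n \le N_T$. This produces an identity $\widetilde T_1 + \widetilde T_2 = 0$, where $\widetilde T_1$ carries the discrete time derivative $(u_K^{n+1}-u_K^n)/\dt$ and $\widetilde T_2$ carries the discrete flux divergence $\sumNK |\sKL| G_{KL}(u_K^n,u_L^n)$. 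The target quantities are the continuous terms
\begin{equation*}
\widetilde T_{10} = -\intRR (u^h)^T \p_t \varphi \,dx\,dt - \intR u_0(x)^T \varphi(x,0)\,dx,
\qquad
\widetilde T_{20} = -\intRR \suma f_\alpha(u^h)^T \p_\alpha \varphi \,dx\,dt,
\end{equation*}
and the goal is to bound $|\widetilde T_1 - \widetilde T_{10}|$ and $|\widetilde T_2 - \widetilde T_{20}|$ by the right-hand side of the claimed inequality.

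For the time term, since $u^h$ is piecewise constant I would perform an Abel summation in $n$ to rewrite $\widetilde T_1$ in terms of the increments $(u_K^{n+1}-u_K^n)$ tested against $\varphi(\cdot,t^{n+1})$, together with a boundary contribution at $n=0$ involving $u_K^0 - u^h(\cdot,0)$; comparing with $\widetilde T_{10}$, the difference is controlled by $\sum_{n,K}|u_K^{n+1}-u_K^n|\intt\int_K |\p_t\varphi|$ plus $\intR |u_0 - u^h(\cdot,0)||\varphi(\cdot,0)|\,dx$, which by Definition~\ref{def:mesures_mub_mubo} is exactly $\langle \mub_\T, |\p_t\varphi|\rangle + \langle \mubo, |\varphi(\cdot,0)|\rangle$. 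For the space term, I would use conservativity~\eqref{eq:conservation} and the discrete integration-by-parts/reorganization by edges (exactly as for $T_2$ in the proof of Proposition~\ref{lem:entropy_estimate}), together with $\sumNK |\sKL| n_{KL}=0$ and the consistency~\eqref{eq:consistance} in the form $G_{KL}(u_K^n,u_K^n)=f(u_K^n)\cdot n_{KL}$, to replace $G_{KL}(u_K^n,u_L^n)$ by $f(u_K^n)\cdot n_{KL}$ at the cost of the numerical viscosity terms $G_{KL}(u_K^n,u_L^n)-G_{KL}(u_K^n,u_K^n)$ and $G_{KL}(u_L^n,u_K^n)-G_{KL}(u_L^n,u_L^n)$. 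The flux-divergence part then matches $\widetilde T_{20}$ up to the discrepancy between cell averages of $\varphi$ and edge values of $\varphi$, which is encoded precisely by the averaging measures $\mub_{KL}$, $\mub_{LK}$.

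Collecting everything, $|\widetilde T_1 + \widetilde T_2 - \widetilde T_{10} - \widetilde T_{20}|$ is bounded by $\langle \mub, |\nabla\varphi|+|\p_t\varphi|\rangle + \langle \mubo, |\varphi(\cdot,0)|\rangle$ with $\mub$, $\mubo$ as in Definition~\ref{def:mesures_mub_mubo}; since $\widetilde T_1 + \widetilde T_2 = 0$ this is exactly the assertion. The main obstacle — really the only delicate point — is the bookkeeping in the edge reorganization of $\widetilde T_2$: one must be careful with the orientation of $n_{KL}$ versus $n_{LK}=-n_{KL}$, pair each interface contribution with the correct cell ($K$ for the $\mub_{KL}$ piece, $L$ for the $\mub_{LK}$ piece), and check that the coefficient $(h+\dt)$ and the normalization $1/(|K||\sKL|\dt^2)$ in $\mub_{KL}$ are consistent with a first-order Taylor remainder bound $|\varphi(x,t)-\varphi(\gamma,s)| \le (|x-\gamma|+|t-s|)\sup(|\nabla\varphi|+|\p_t\varphi|) \le (h+\dt)\sup(\cdots)$ along the segment joining $(\gamma,s)\in\sKL\times[t^n,t^{n+1}]$ to $(x,t)\in K\times[t^n,t^{n+1}]$. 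Once this is matched, the rest is a routine transcription of the argument already carried out in detail for Proposition~\ref{lem:entropy_estimate}, which is why it can safely be left to the reader.
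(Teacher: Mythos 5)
Your proposal is correct and is exactly the argument the paper intends: it declares the proof of Proposition~\ref{lem:laxwendroff} to follow the same guidelines as that of Proposition~\ref{lem:entropy_estimate}, with the scheme~\eqref{eq:numsch} tested against cell--time averages of $\varphi$, the consistency/conservativity properties~\eqref{eq:consistance}, \eqref{eq:conservation}, \eqref{eq:divnulle} replacing their entropy-flux counterparts, and the discrepancies absorbed into $\mub_\T$, $\mub_{KL}$, $\mub_{LK}$ and $\mubo$ (your only slip is cosmetic: the initial boundary term involves $u_0-u^h(\cdot,0)$, not $u_K^0-u^h(\cdot,0)$, which is what you in fact bound). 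The only structural difference from the entropy case, which you correctly note, is that equality $\widetilde T_1+\widetilde T_2=0$ yields the two-sided (absolute value) estimate.
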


\section{Error estimate using the relative entropy}
\label{sec:error}

With the error measures $\mu$, $\mu_0$, $\ov \mu$, and $\ov \mu_0$ at hand, 
  we are now in position to precise 
  inequality~\eqref{eq:Huhuapp} satisfied by the relative entropy
  $H(u^h,u)$ and then to conclude the proof of Theorem~\ref{thm:err}.  

\subsection{Relative entropy for approximate solutions}

\begin{Prop}
  Let $\mu$ and $\mu_0$ be the measures introduced 
  in Definition~\ref{def:mesures_mu_mu0}, and let $\mub$ and $\mub_0$ 
  be the measures introduced  in Definition~\ref{def:mesures_mub_mubo},
  then, for all $\varphi \in C_c^1(\R^d\times\R^+;\R^+)$, one has 
    \begin{multline}      \label{eq:ineq_relative_entrop}\intRR \left(H(u^h,u)\p_t\varphi (x,t) + \suma
      Q_{\alpha}(u^h,u) \p_{\alpha}\varphi (x,t) \right)dx dt\geq
      \\
      -\intRR \left(|\nabla \varphi| + |\p_t \varphi|\right)
      d\mu(x,t) - \intR \varphi (x,0)d\mu_0(x) \\
       -\intRR \left(|\nabla \left[\varphi D\eta(u)\right]| + |\p_t
      \left[\varphi D\eta(u)\right]|\right) d\mub (x,t)
        \\
       - \intR [\varphi{D}\eta(u)](x,0) d\mubo (x) + \intRR
      \varphi \div u^T Z_{\alpha}(u^h,u) dx dt,
    \end{multline}
  where $Z_{\alpha}(u^h,u)= D^2 \eta(u)
  (f_{\alpha}(u^h)-f_{\alpha}(u)- \left(D f_{\alpha}(u)\right) (u^h-u))$.
\end{Prop}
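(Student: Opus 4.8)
The plan is to combine the three continuous formulations already established for $u^h$ — the entropy inequality of Proposition~\ref{lem:entropy_estimate}, the approximate weak formulation of Proposition~\ref{lem:laxwendroff}, and the exact properties of the strong solution $u$ — in exactly the way one derives \eqref{eq:EDPsurH} in the smooth setting. Concretely, since $u \in W^{1,\infty}(\R^d\times[0,T];\O)$, the quantities $\eta(u)$, $\xi_\a(u)$, $D\eta(u)$ are Lipschitz, and $u$ satisfies \eqref{eq:weak} and \eqref{eq:entro_strong} exactly. For a nonnegative test function $\varphi \in C_c^1(\R^d\times\R^+;\R^+)$, I would write $H(u^h,u) = \eta(u^h) - \eta(u) - D\eta(u)(u^h - u)$ and test against $\varphi$, distributing the three pieces:

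\emph{First}, the term $\intRR \eta(u^h)\p_t\varphi + \suma \xi_\a(u^h)\p_\a\varphi \, dxdt + \intR \eta(u_0)\varphi(\cdot,0)\,dx$ is bounded below by $-\intRR(|\nabla\varphi|+|\p_t\varphi|)d\mu - \intR\varphi(\cdot,0)d\mu_0$ by Proposition~\ref{lem:entropy_estimate}.

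\emph{Second}, the $-\eta(u)$ piece contributes $-\intRR \eta(u)\p_t\varphi + \suma \xi_\a(u)\p_\a\varphi\,dxdt - \intR\eta(u_0)\varphi(\cdot,0)\,dx$, which vanishes because $u$ satisfies \eqref{eq:entro_strong} in the weak sense with initial datum $u_0$ (here I must be slightly careful that $u^h$ and $u$ share the same $u_0$ in the initial integrals, so these $\eta(u_0)$ terms cancel exactly).

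\emph{Third}, the bilinear piece $-D\eta(u)(u^h-u)$ is the delicate one. Writing $\varphi_\a := \varphi\, D\eta(u)$ (a vector-valued $C_c^1$ test function, legitimate since $D\eta(u)$ is Lipschitz and bounded), I apply Proposition~\ref{lem:laxwendroff} with test function $\varphi D\eta(u)$ to control $\intRR (u^h)^T\p_t[\varphi D\eta(u)] + \suma f_\a(u^h)^T\p_\a[\varphi D\eta(u)]\,dxdt + \intR u_0^T[\varphi D\eta(u)](\cdot,0)\,dx$ by $\intRR(|\nabla[\varphi D\eta(u)]|+|\p_t[\varphi D\eta(u)]|)d\mub + \intR|[\varphi D\eta(u)](\cdot,0)|d\mubo$; and I use the exact weak formulation \eqref{eq:weak} for $u$ against the same test function $\varphi D\eta(u)$ to rewrite the $u$-part exactly. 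The main obstacle — and the heart of the computation — is the algebraic identity that, when one collects all the chain-rule terms $\p_t[\varphi D\eta(u)]$, $\p_\a[\varphi D\eta(u)]$ and expands $\p_t(D\eta(u)) = D^2\eta(u)\p_t u$, $\p_\a(D\eta(u)) = D^2\eta(u)\p_\a u$, the derivatives falling on $D\eta(u)$ recombine — using the strong-solution equation $\p_t u = -\suma \p_\a f_\a(u)$ and the entropy-flux relation \eqref{eq:xi}, i.e. $D\xi_\a(u) = D\eta(u)Df_\a(u)$ — into exactly the source term $\intRR \varphi \suma (\p_\a u)^T Z_\a(u^h,u)\,dxdt$, with $Z_\a$ as in \eqref{eq:Z}. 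This is the discrete-in-spirit but continuous-in-execution version of the classical DiPerna--Dafermos relative entropy computation; one must track that $Q_\a(u^h,u) = \xi_\a(u^h) - \xi_\a(u) - D\eta(u)(f_\a(u^h)-f_\a(u))$ emerges precisely as the coefficient of $\p_\a\varphi$ after all cancellations.

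Assembling the three contributions: the first gives the $\mu,\mu_0$ error terms; the third gives the $\mub,\mubo$ error terms (with test function $\varphi D\eta(u)$, hence the $|\nabla[\varphi D\eta(u)]|$ and $|\p_t[\varphi D\eta(u)]|$ appearing in \eqref{eq:ineq_relative_entrop}) plus the $+\intRR \varphi\suma(\p_\a u)^T Z_\a(u^h,u)\,dxdt$ source; and the second vanishes identically. Collecting the coefficients of $\p_t\varphi$ and $\p_\a\varphi$ on the left-hand side yields $H(u^h,u)$ and $Q_\a(u^h,u)$ respectively, and the initial-data terms $\intR(\cdots)\varphi(\cdot,0)\,dx$ cancel up to the $\mu_0$ and $\mubo$ contributions. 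The sign bookkeeping in passing from $u^h$-formulation inequalities (lower bounds) to the relative-entropy inequality requires that $\varphi \ge 0$, which is assumed. The only real work beyond this bookkeeping is the algebraic identity in the third step; everything else is substitution and the triangle inequality, so I would present that identity carefully and leave the routine term-matching to a short verification.
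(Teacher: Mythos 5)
Your proposal is correct and follows essentially the same route as the paper: subtract the exact entropy identity of the strong solution from the approximate entropy inequality of Proposition~\ref{lem:entropy_estimate}, complete the difference into the pair $(H,Q)$, and handle the bilinear term by testing Proposition~\ref{lem:laxwendroff} and the exact weak formulation of $u$ against the Lipschitz field $\varphi D\eta(u)$, with the chain-rule terms recombining via $\p_t u=-\suma\p_\a f_\a(u)$ and the commutation relation~\eqref{eq:entro_commute} into the $Z_\a$ source. The only cosmetic difference is that the paper works with the Lipschitz (not $C^1$) test field $\varphi D\eta(u)$, justified by the extension of $\mub,\mubo$ to the class $E$ of Remark~\ref{rem:mu_mub}, a point you gloss over but which is handled identically in spirit.
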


\begin{proof}
  Let $\varphi$ be any nonnegative Lipschitz continuous test function
  with compact support in $\R^d \times [0,T]$.
  Since $u$ is a classical solution of
  \eqref{eq:P}--\eqref{eq:u0}, it satisfies
  \begin{equation*}
    \intRR \eta(u)\p_t \varphi(x,t)
    + \suma \xi_{\alpha}(u) \p_{\alpha}\varphi(x,t) dx dt
    +\intR\eta(u_0)\varphi(x,0)dx=0.
  \end{equation*}
  Subtracting this identity to \eqref{eq:ineq_entropy} yields
  \begin{multline}
    \label{eq:etauh-etau}
      \intRR (\eta(u^h)-\eta(u))\p_t \varphi(x,t) +
      \suma (\xi_\alpha(u^h)-\xi_\alpha(u)) \p_\alpha\varphi(x,t) dx
      dt \\
      - \geq \intRR (|\nabla \varphi| + |\p_t
      \varphi|) d\mu(x,t)-\intR\varphi(x,0)d\mu_0(x).
  \end{multline}
We now exhibit the relative entropy-relative entropy flux pair in
the inequality \eqref{eq:etauh-etau} and obtain
  \begin{multline}
  \label{eq:Huhu}
    \lefteqn{\intRR \left(H(u^h,u) \p_t \varphi +
    \suma Q_{\alpha}(u^h,u)\p_{\alpha}\varphi\right) dx dt 
    \geq}\\
   -\intRR |\nabla \varphi|+|\p_t \varphi|
   d\mu(x,t) -\intR \varphi(x,0)d\mu_0(x) \\
   - \intRR
   \left({D}\eta(u) \right)^T \left((u^h-u)\p_t \varphi +
   \suma (f_{\alpha}(u^h)-f_{\alpha}(u))
   \p_{\alpha}\varphi\right) dx dt.
  \end{multline}
Since $u$ is a strong solution of
\eqref{eq:P}--\eqref{eq:u0}, it satisfies the following weak
identity, $\forall \psi \in C_c(\R^d\times \R^+;\R^m)$
  \begin{equation}
  \label{eq:egentrop_u}
    \intRR \left[u \p_t\psi (x,t)+ \suma
    f_{\alpha}(u) \p_{\alpha} \psi (x,t)\right] dx dt + \intR u_0(x)
    \psi(x,0) dx =0.
\end{equation}
Then we combine Proposition~\ref{lem:laxwendroff} with \eqref{eq:egentrop_u}, 
so that 
using the Lipschitz continuous vector field $[\varphi{D}\eta(u)]$
as test function leads to
  \begin{multline}
  \label{eq:uh-u}
    -\intRR \left(D\eta(u)\right)^T  \left[(u^h-u)\p_t \varphi + \suma
    (f_{\alpha}(u^h)-f_{\alpha}(u)) \p_{\alpha}\varphi\right] dx dt \geq \\
    -\intRR |\nabla [\varphi D\eta(u)]| + |\p_t
    [\varphi D\eta(u)]| d\mub(x,t)
    - \intR [\varphi D\eta(u)](x,0) d\mubo(x) \\
    + \intRR (u^h-u)\varphi\p_t({D}\eta(u)) +
    \suma (f_{\alpha}(u^h)-f_{\alpha}(u)) \varphi
    \p_{\alpha}(D\eta(u)) dx dt.
\end{multline}
Moreover identity \eqref{eq:entro_commute} together with~\eqref{eq:P} gives
\begin{multline}
  \label{eq:nablaeta}
    \p_t(D\eta(u))  = \p_t u^T D^2 \eta(u) 
     = -\suma\p_{\alpha} \left(f_{\alpha}(u)\right)^T D^2 \eta(u) \\
     = -\suma \p_{\alpha} u^T Df_{\alpha}(u)^T  D^2\eta(u) 
    = -\suma \p_{\alpha} u^T D^2\eta(u) Df_{\alpha}(u).
\end{multline}
Injecting \eqref{eq:uh-u} and \eqref{eq:nablaeta} into \eqref{eq:Huhu}
leads to the conclusion.
\end{proof}

\begin{Lem}\label{lem:Z}
There exists $C_Z$ depending only on $f, \eta$ and $\O$ such that, for all 
$\alpha \in \{1,\dots, d\}$, 
\begin{equation}\label{eq:Z_quad}
|Z_\alpha(u^h,u)| \le C_Z |u^h - u|^2.
\end{equation}
\end{Lem}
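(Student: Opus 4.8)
The plan is to recognize $Z_\alpha(u^h,u)$ as essentially the second-order Taylor remainder of $f_\alpha$ at $u$, pre-multiplied by the bounded matrix $D^2\eta(u)$, and then to exploit the $C^2$ regularity of $f_\alpha$ on the compact set $\oO$ together with the boundedness of $D^2\eta$ from~\eqref{eq:beta}. Concretely, recall from~\eqref{eq:Z} that
$$
Z_\alpha(v,u) = D^2\eta(u)\bigl(f_\alpha(v) - f_\alpha(u) - Df_\alpha(u)(v-u)\bigr),
$$
so the whole game is to control the vector $R_\alpha(v,u) := f_\alpha(v) - f_\alpha(u) - Df_\alpha(u)(v-u)$.

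First I would write $R_\alpha(v,u)$ in integral (Taylor-with-remainder) form, exactly as is done for $H$ in~\eqref{eq:H_int}:
$$
R_\alpha(v,u) = \int_0^1 \int_0^\theta D^2 f_\alpha\bigl(u + \gamma(v-u)\bigr)(v-u,v-u)\, d\gamma\, d\theta,
$$
where $D^2 f_\alpha(w)(\cdot,\cdot)$ denotes the (vector-valued) bilinear Hessian form of $f_\alpha$ at $w$. Since $f_\alpha \in C^2(\oO;\R^m)$ and $\oO$ is compact and convex, the quantity $M_f := \sup_{\alpha}\sup_{w\in\oO}\|D^2 f_\alpha(w)\|$ is finite, and because $u,v \in \O \subset \oO$ the whole segment $u+\gamma(v-u)$ stays in $\oO$; hence $|R_\alpha(v,u)| \le \tfrac12 M_f\, |v-u|^2$. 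Then I would bound $|Z_\alpha(v,u)| \le \|D^2\eta(u)\|\,|R_\alpha(v,u)| \le \beta_1 \cdot \tfrac12 M_f\,|v-u|^2$ using~\eqref{eq:beta}, and finally specialize $v = u^h \in \O$ (which is legitimate by Lemma~\ref{lem:Omega_stab}) to obtain~\eqref{eq:Z_quad} with $C_Z = \tfrac12 \beta_1 M_f$, a constant depending only on $f$, $\eta$ and $\O$ as claimed.

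There is essentially no obstacle here: the only thing to be a little careful about is that the estimate must hold for the \emph{arguments actually used}, namely $u^h(x,t)$ and $u(x,t)$, both of which lie in $\O \subset \oO$ — the first by the discrete maximum-principle Lemma~\ref{lem:Omega_stab} and the strengthened CFL~\eqref{eq:CFL_WBV} (which implies~\eqref{eq:CFL0}), the second by assumption~\eqref{eq:Omega} on the strong solution — so that the convex combination $u + \gamma(u^h-u)$ never leaves the compact set on which $D^2 f_\alpha$ is controlled. Alternatively, if one prefers to avoid writing the Taylor integral explicitly, the same conclusion follows from the general fact that a $C^2$ map on a compact convex set has a globally Lipschitz differential, so $|f_\alpha(v)-f_\alpha(u)-Df_\alpha(u)(v-u)| \le \tfrac12 \mathrm{Lip}(Df_\alpha)\,|v-u|^2$; I would state it whichever way is shortest, the content being identical.
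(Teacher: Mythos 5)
Your proposal is correct and follows essentially the same route as the paper's proof: a second-order Taylor expansion of $f_\alpha$ around $u$, the boundedness of $D^2 f_\alpha$ on the compact convex set $\oO$, and a bound on $D^2\eta(u)$. The only cosmetic difference is that you take $C_Z = \tfrac12 \beta_1 M_f$ using the spectral bound~\eqref{eq:beta}, whereas the paper writes $C_Z$ in terms of sup-norms of $D^2\eta$ and $D^2 f_\alpha$; both constants depend only on $f$, $\eta$ and $\O$, so the content is identical.
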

\begin{proof}
For ${ M} : \O \to \R^{m\times m}$ and $\Upsilon:\O \to {\mathcal L}(\R^m; \R^{m\times m})$, we set
\begin{equation*}
  \left\|\, { M} \, \right\|_{\infty,\infty} = \sup_{u \in \O} \left|
    { M}(u)\right|_\infty, \qquad
  \left\|\, \Upsilon \, \right\|_{\infty,2} = \sup_{u \in \O}\left(
    \sup_{v \in \R^m, |v| = 1} \left| {\Upsilon}(u) \cdot
      v\right|_2\right),
\end{equation*}
where $|\cdot|_2$ and $|\cdot|_\infty$ denote the usual matrix $2$- and 
$\infty$-norms respectively.
Using the Taylor expansion of $f_\alpha$ around $u$, we get that 
$$
\left|f_\alpha(u^h) - f_\alpha(u) - \left(D f_\alpha(u)\right)(u^h - u) \right|
\le \frac12 \left\|\, D^2 f_\alpha \, \right\|_{\infty,2} |u^h - u|^2, 
$$
then, estimate~\eqref{eq:Z_quad} holds for 
$
C_Z = \frac12 \left\|\, D^2\eta \,
\right\|_{\infty,\infty}\left\|\, D^2 f_\alpha \,
\right\|_{\infty,2}.
$
\end{proof}

We now prove the following lemma on the finite speed of propagation.
\begin{Lem}\label{lem:s}
  Let $L_f$ be defined by~\eqref{eq:Lf_Rayleigh}, then, for all $s \ge L_f$, one has 
\begin{equation}\label{eq:finite_speed}
  sH(u^h,u) + \sum_{\alpha = 1}^d \frac{x_\alpha}{{|x|}} Q_\alpha(u^h,u) \ge 0. 
  \end{equation}
\end{Lem}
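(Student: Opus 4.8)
The plan is to show that the quadratic form $s\,H(u^h,u) + \sum_\alpha \frac{x_\alpha}{|x|} Q_\alpha(u^h,u)$ is nonnegative by comparing both terms to $\frac{\beta_0}{2}|u^h-u|^2$-type bounds, using the interpretation of $L_f$ as a Rayleigh quotient for $D^2\eta$-symmetric matrices. First I would fix $u,v\in\Omega$ (writing $v=u^h$ for short) and a unit vector $\nu = x/|x|\in S^{d-1}$, and introduce the averaged Hessian $\ov A := \int_0^1 D^2\eta(u+\gamma(v-u))\,d\gamma$, which is symmetric positive definite with spectrum in $[\beta_0,\beta_1]$ by~\eqref{eq:beta}. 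Setting $w := v-u$, the integral formula~\eqref{eq:H_int} (after integrating the $\theta$ variable, or equivalently a single Taylor expansion with integral remainder) gives $H(v,u) = \int_0^1(1-\gamma)\,w^T D^2\eta(u+\gamma w)w\,d\gamma$, a quantity comparable to $w^T \ov A w$ up to harmless constants; the cleanest route is actually to use the plain remainder form $H(v,u)=\int_0^1(1-\gamma) w^TD^2\eta(u+\gamma w)w\,d\gamma \ge \frac{\beta_0}{2}|w|^2$ and, more importantly, to keep it in the integral form so the same measure $d\gamma$ appears in $Q_\alpha$.

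Next I would expand $Q_\alpha$ the same way. From Definition~\ref{def:rel_entro}, $Q_\alpha(v,u) = \xi_\alpha(v)-\xi_\alpha(u) - D\eta(u)(f_\alpha(v)-f_\alpha(u))$. Using~\eqref{eq:xi}, i.e.\ $D\xi_\alpha = D\eta\,Df_\alpha$, together with the fundamental theorem of calculus along the segment $u+\gamma w$, one obtains
\[
Q_\alpha(v,u) = \int_0^1 \big(D\eta(u+\gamma w) - D\eta(u)\big)\,Df_\alpha(u+\gamma w)\,w\;d\gamma
= \int_0^1 \int_0^\gamma w^T D^2\eta(u+\tau w)\,d\tau\; Df_\alpha(u+\gamma w)\,w\;d\gamma .
\]
The idea is then that $\sum_\alpha \nu_\alpha Df_\alpha(u+\gamma w)$ is self-adjoint for the inner product $\langle\cdot,\cdot\rangle_{u+\tau w}$ associated with $D^2\eta(u+\tau w)$ — but here the subtlety flagged in the Remark after~\eqref{eq:Lf_Rayleigh} bites: the weight point $\tau$ differs from the evaluation point $\gamma$. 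This is exactly why the definition~\eqref{eq:Lf_Rayleigh} of $L_f$ takes a supremum over \emph{two} independent points $u,v$. Thus, pointwise in $(\tau,\gamma)$, the bound
\[
\Big| w^T D^2\eta(u+\tau w)\,\textstyle\sum_\alpha \nu_\alpha Df_\alpha(u+\gamma w)\,w \Big|
\le L_f\; w^T D^2\eta(u+\tau w)\,w
\]
holds directly by definition of $L_f$ (using $|\nu|=1$ and that a convex combination of the $Df_\alpha$ is controlled coordinatewise, or more simply applying the bound with $f_\alpha$ replaced by $\sum_\alpha \nu_\alpha f_\alpha$, whose Hessian-weighted Rayleigh quotient is still bounded by $L_f$ since $|\nu|\le 1$). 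Integrating this in $\tau$ over $[0,\gamma]$ and then in $\gamma$ over $[0,1]$ yields
\[
\Big| \sum_\alpha \nu_\alpha Q_\alpha(v,u) \Big| \le L_f \int_0^1\!\!\int_0^\gamma w^T D^2\eta(u+\tau w)\,w\;d\tau\,d\gamma = L_f\, H(v,u),
\]
where the last equality is precisely~\eqref{eq:H_int}.

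Finally, for $s\ge L_f$ and $H(v,u)\ge 0$ (which is~\eqref{eq:Mbeta}), we conclude $s H(u^h,u) + \sum_\alpha \frac{x_\alpha}{|x|}Q_\alpha(u^h,u) \ge s H - L_f H = (s-L_f)H \ge 0$, which is~\eqref{eq:finite_speed}. The main obstacle is the bookkeeping in the double-integral representation of $Q_\alpha$ and making rigorous the claim that $L_f$ as defined in~\eqref{eq:Lf_Rayleigh} controls the mixed-point bilinear expression above; once that identity-plus-estimate is set up, everything else is routine. An alternative, slightly more robust to the two-point issue, is to prove the sharper pointwise inequality $\pm(\sum_\alpha \nu_\alpha Q_\alpha) + L_f H \ge 0$ by differentiating $g(\gamma) := \pm\sum_\alpha\nu_\alpha\big(\xi_\alpha(u+\gamma w) - \xi_\alpha(u) - D\eta(u)(f_\alpha(u+\gamma w)-f_\alpha(u))\big) + L_f\big(\eta(u+\gamma w)-\eta(u)-D\eta(u)\gamma w\big)$ twice in $\gamma$ and checking $g(0)=g'(0)=0$, $g''(\gamma)\ge 0$; the second-derivative sign is again exactly the statement that $L_f\,D^2\eta(u+\gamma w) \pm \sum_\alpha\nu_\alpha D^2\eta(u+\gamma w)Df_\alpha(u+\gamma w)$ is positive semidefinite, which now is a genuine \emph{single-point} Rayleigh-quotient statement and hence immediate from~\eqref{eq:Rayleigh2} applied to $\sum_\alpha\nu_\alpha f_\alpha$.
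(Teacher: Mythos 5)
Your main argument is essentially the paper's own proof: the same integral representations of $H$ and $Q_\alpha$ along the segment from $u$ to $u^h$ (i.e.\ \eqref{eq:H_int} and the double-integral formula for $Q_\alpha$ obtained from \eqref{eq:xi}), followed by the two-point Rayleigh-quotient bound built into the definition \eqref{eq:Lf_Rayleigh} of $L_f$, yielding $\bigl|\sum_\alpha \nu_\alpha Q_\alpha(u^h,u)\bigr| \le L_f\, H(u^h,u)$ and hence \eqref{eq:finite_speed}. Only your optional ``alternative'' route at the end is not quite correct as stated: $g''(\gamma)$ also contains the term $\pm\sum_\alpha \nu_\alpha\,\bigl(D\eta(u+\gamma w)-D\eta(u)\bigr)\,D^2 f_\alpha(u+\gamma w)(w,w)$, which is not controlled by the single-point Rayleigh-quotient statement, but since this is an aside your proof stands as written.
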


\begin{proof}
  Denote by $w^h := u^h -u$, then 
  it follows from the characterization~\eqref{eq:H_int} of the relative entropy $H$ that 
  \begin{equation}
    H =\int_0^1 \int_0^\theta {(w^h)}^T D^2_u \eta(u+\gamma w^h)
    w^h d\gamma d\theta. \label{eq:H1}
\end{equation}
Denoting by $\mathbb A_\gamma$ the symmetric definite positive matrix
$D^2_u \eta(u+\gamma w^h)$, 
and by $\langle \cdot \, , \, \cdot \rangle_{\mathbb A_\gamma}$ the
scalar product on $\R^n$ defined by $\langle v_1  , v_2 \rangle_{\mathbb A_\gamma} = v_1^T\mathbb A_\gamma v_2$, 
the relation~\eqref{eq:H1} can be rewritten 
\begin{equation}\label{eq:H2}
H = \int_0^1 \int_0^\theta \langle w^h  , w^h\rangle_{\mathbb
  A_\gamma} d\gamma d\theta.
\end{equation}
On the other hand, it follows from the definition~\eqref{eq:xi} of the
entropy flux $\xi$ that
\begin{align}
  Q_\alpha 	= & \int_0^1  \left(D\eta(u + \theta w^h) -
    D\eta(u) \right) \left(D f_\alpha(u + \theta
    w^h)\right)^T w^h d\theta \nn \\
  = & \int_0^1 \int_0^\theta \langle w^h, \left(D
    f_\alpha(u + \theta w^h)\right)^T w^h
  \rangle_{\mathbb A_\gamma} d \gamma d \theta \nn
\end{align}
for all $\alpha \in \{1,\dots, d\}$.
The quantity $L_f$ introduced in~\eqref{eq:Lf_Rayleigh} has been designed so that 
$
\left| \langle w^h, \left(D f_\alpha(u + \theta w^h)\right)^T
  w^h \rangle_{\mathbb A_\gamma}\right| \le L_f  \langle w^h, w^h
\rangle_{\mathbb A_\gamma}.
$
Therefore, we obtain  
\begin{equation}
| Q_\alpha | \le
 L_f  \int_0^1 \int_0^\theta  \langle w^h, w^h \rangle_{\mathbb
  A_\gamma} d \gamma d \theta = L_f H. \label{eq:majo_Q}
  \end{equation}
The fact that~\eqref{eq:finite_speed} holds is a straightforward
consequence of~\eqref{eq:majo_Q}.
\end{proof}

\subsection{End of the proof of Theorem~\ref{thm:err}}

We now have at hand all the tools needed for comparing $u^h$ to $u$ {\em via} 
the relative entropy $H(u^h,u)$. 

Let $\delta \in (0,T)$ be a parameter to be fixed later on, and, for $k \in \N$, we 
define the nonincreasing Lipschitz continuous function $\theta_{k} : \R^+ \to [0,1]$ by 
$$
\theta_{k}(t) = \min\left(1,\max\left(0, \frac{(k+1)\delta - t }{\delta}\right)\right), \quad \forall t \ge 0.
$$
Let us also introduce the Lipschitz continuous function $\psi: \R^d \times \R_+ \to [0,1]$ defined by 
$
\psi(x,t) = 1-\min\left(1 , \max\left(0 , {|x|} - r - L_f(T-t) +1 \right)\right), 
$
where $L_f$ is defined by~\eqref{eq:Lf_Rayleigh}.
The function $\varphi_k:(x,t)\in \R^d \times \R^+ \mapsto \theta_k(t) \psi(x,t) \in [0,1]$ 
can be considered as a test function in~\eqref{eq:ineq_relative_entrop}. Indeed, denoting by 
$$
{\Ii}_k^\delta = [k\delta, (k+1)\delta], \quad \Cc_{r,T}(t)  =
\left\{(x,t) \; | \; {|x|} \in [r+L_f(T-t),r+L_f(T-t)+1] \right\},  
$$
one has 
\begin{align*}
\p_t \varphi_k(x,t) =& - \frac1\delta {\bf 1}_{\Ii_k^\delta}(t) \psi(x,t) - 
L_f \theta_k(t) {\bf 1}_{\Cc_{r,T}(t)}(x), \\
\nabla \varphi_k(x,t) =& -\frac{x}{{|x|}} \theta_k(t) {\bf 1}_{\Cc_{r,T}(t)}(x), 
\end{align*}
so that both $\p_t \varphi_k$ and $|\nabla \varphi_k|$ belong to the set $E$ defined in 
Remark~\ref{rem:mu_mub}.
Then taking $\varphi_k$ as test function in~\eqref{eq:ineq_relative_entrop}  yields 
\begin{align}
&\frac{1}\delta \int_{\Ii_k^\delta} \int_{\R^d} H \psi dxdt +
\int_0^T \theta_k(t) \int_{\R^d} \sum_{\alpha = 1}^d \p_\alpha u^T
Z_\alpha (u^h,u)\; \psi  dxdt  \nn \\
&\qquad \le - \int_0^T \theta_k(t) \int_{\Cc_{r,T}(t)} \left(L_f H+
  \sum_{\alpha = 1}^d Q_\alpha \frac{x_\alpha}{{|x|}} \right) dxdt +
R_1 + R_2 + R_3 + R_4, \nn
\end{align}
where 
\begin{align*}
  R_1 =	& \iint_{\R^d \times [0,T]} (|\nabla \varphi_k(x,t)| +  |\p_t
  \varphi_k(x,t)|)  d \mu(x,t), \\
  R_2 =	&  \int_{\R^d} \psi(x,0) d\mu_0(x),\\
  R_3 =	&  \iint_{\R^d \times [0,T]}  |D\eta(u)| (|\nabla
  \varphi_k(x,t)| +  |\p_t \varphi_k(x,t)|) d \mub(x,t)\\
  		&  + \iint_{\R^d \times [0,T]} \varphi_k(x,t) {|D^2
                  \eta(u)(x,t) |}_\infty \left( | \p_t u | + | \nabla
                  u | \right) d \mub(x,t),\\
  R_4 =	&   \int_{\R^d} \psi(x,0) |D \eta(u_0)| d\mub_0(x). 
\end{align*}
Thanks to Lemma~\ref{lem:s}, one has
\be
\frac{1}\delta \int_{\Ii_k^\delta} \int_{\R^d} H \psi dxdt +  \int_0^T
\theta_k(t) \int_{\R^d} \sum_{\alpha = 1}^d \p_\alpha u^T Z_\alpha
(u^h,u)\; \psi  dxdt \le   R_1 + R_2 + R_3 + R_4. \label{eq:R1234}
\end{equation}
The definition of $\varphi_k$ ensures that ${\| \varphi_k \|}_\infty = 1$, 
${\|\nabla \varphi_k \|}_\infty\le 1$,  ${\| \p_t \varphi_k \|}_\infty \le \frac1\delta + L_f$ and 
$$
{\rm supp}(\varphi_k) \subset \bigcup_{t \in [0,(k+1)\delta]} B(0,r+L_f(T-t)+1) \times \{t\},
$$
This leads to 
$$
R_1 \le \left(\frac1\delta + L_f + 1\right) \mu({\rm supp}(\nabla
\varphi_k) \cup {\rm supp}(\p_t \varphi_k)).
$$
Thanks to Lemma~\ref{lem:borne_mu_mu0}, we obtain that there exists
$C_\mu^k$ (depending on $k$, $r$, $T$,  $\delta$, $L_f$, $a$, 
$\lambda^\star$, $u_0$, $G_{KL}$ and $\eta$) such that 
\begin{equation}\label{eq:R1}
R_1 \le C_\mu^k \left(\frac1\delta + L_f + 1\right) {\sqrt{h}}. 
\end{equation}
It follows from similar arguments that there exists $C_{\mu_0}^k$
(depending on $k$ $\eta$, $u_0$, $r$, $L_f$, $T$ and $\delta$) such
that 
\be \label{eq:R2}
R_2 \le  C_{\mu_0}^k h, 
\end{equation}
and, thanks to Lemma~\ref{lem:bornes_mub_mub0}, we obtain that there
exists $C_{\mub_0}^k$ (depending on $k$, $r, u_0, L_f,T$ and $\delta$) 
such that 
\be \label{eq:R4}
R_4 \le  C_{\mub_0}^k {\| D \eta(u_0) \|}_\infty  h.
\end{equation}
Similarly, there exists $C_{\mub}^k$ (depending on $k$, $T,r,L_f,a,
\lambda^\star, u_0, G_{KL}$ and $\delta$) such that 
\begin{equation}\label{eq:R3}
R_3 \le  C_{\mub}^k \left( {\| D\eta (u) \|}_\infty\left(\frac1\delta
    + L_f + 1\right)  +  {\| D^2 \eta \|}_{\infty,\infty}
  (\| \p_t u \|_\infty + \| \nabla u \|_\infty )\right) {\sqrt{h}}.
\end{equation}
By using Lemma~\ref{lem:Z} and $0 \le \theta_k(t) \le 1$, we obtain 
\begin{multline}\label{eq:R5}
\int_0^T \theta_k(t) \int_{\R^d} \sum_{\alpha = 1}^d \p_\alpha u^T Z_\alpha (u^h,u)\; \psi  dxdt   \\
 \qquad  \ge - C_Z {\|\nabla u\|}_\infty\iint_{\R^d \times
  [0,(k+1)\delta]} | u^h(x,t) - u(x,t) |^2 \psi(x,t)dxdt.
\end{multline}
Since the entropy $\eta$ is supposed to be $\beta_0$-convex, we have 
\begin{equation}\label{eq:H_beta}
H(x,t) \ge \frac{\beta_0}2  | u^h(x,t) - u(x,t) |^2 .
\end{equation}
Putting~\eqref{eq:R1}--\eqref{eq:H_beta} together with~\eqref{eq:R1234} provides  
\begin{multline}\label{eq:err1}
\left(\frac{\beta_0}{2 \delta} - C_Z {\| \nabla u \|}_\infty\right)
\int_{\Ii_k^\delta} \int_{\R^d} | u^h - u |^2 \psi\;  dxdt   \\
 \hspace{2cm}
\le C_Z {\| \nabla u \|}_\infty \iint_{\R^d \times [0,k\delta]} | u^h
- u |^2 \psi\;  dxdt + C_{k} {\sqrt{h}}, 
\end{multline}
where (recall that $h \le1$)
\begin{align*}
C_{k} = &C_\mu^k \left(\frac1\delta + L_f + 1\right) + C_{\mu_0}^k + C_{\mub_0}^k {\| D \eta(u_0) \|}_\infty \\
&+ C_{\mub}^k \left( {\| D \eta (u) \|}_\infty\left(\frac1\delta + L_f + 1\right)  +  {\| D^2 \eta \|}_{\infty,\infty}
(\| \p_t u \|_\infty + \| \nabla u \|_\infty )\right).
\end{align*}
Choose now 
$\delta = \frac{T}{p^\star+1}$ with  $p^\star =
\min\left\{ p \in \N^\star \; | \; \frac{T}{p+1} \le \frac{\beta_0}{2C_Z
    {\| \nabla u \|}_\infty + 2} \right\}$ 
(note that neither $\delta$ nor $p^\star$ depend on $h$),  
so that~\eqref{eq:err1} becomes 
\begin{equation}\label{eq:err2}
e_k \le \omega \sum_{i=0}^{k-1} e_i + C_k {\sqrt{h}}, 
\end{equation}
where 
$e_k = \int_{\Ii_k^\delta} \int_{\R^d} | u^h - u |^2 \psi\;  dxdt$ and  
$\omega = C_Z {\| \nabla u \|}_\infty.$
Hence, a few algebraic calculations allow us to claim that 
\begin{equation}\label{eq:N-1}
 \sum_{k=0}^{p^\star} e_k \le  {\sqrt{h}} \sum_{k=0}^{p^\star}
 C_k \left((1+\omega)^{p^\star - k + 1} - \omega\right).
\end{equation}
Noticing that $\psi(x,t) = 1$ if $x \in B(0,r + L_f(T-t))$, 
and that $\psi(x,t) \ge 0$ for all $(x,t) \in \R^d \times (0,T)$, one
finally has 
\begin{equation}\label{eq:N}
\int_0^T \int_{B(0, r-st)} |u-u^h|^2 dxdt \le  \int_0^T \int_{\R^d}
|u-u^h|^2 \psi(x,t)  dxdt = 
\sum_{k=0}^{p^\star} e_k.
\end{equation}
We conclude the proof using~\eqref{eq:N-1} in \eqref{eq:N}.

\section{Conclusion}

We analyzed the convergence of first order finite volume schemes
entering the framework detailed in~\cite{bouchut_book} and summarized
in~\S\ref{sssec:fluxes}. In~\S\ref{ssec:WBV}, we derived a
so-called~\emph{weak-BV} estimate based on the quantification of the
numerical entropy dissipation.  This estimate is new in the case of
time-explicit finite volume schemes.  It allows to prove some error
estimate between a numerical solution and a strong solution of order
$h^{1/4}$ in the space-time $L^2$-norm. Let us also mention that one
could use the weak-$BV$ estimate to prove to convergence to entropy
measure-valued solutions, following \cite{DiPerna85} (see also
\cite{HM14}).  On the other hand, strong solutions are global if one
adds some suitable entropy-dissipating relaxation
term~\cite{HN03,Yong04}), and our work could be extended to this
situation without any major difficulty by mixing our result with the
one proposed in~\cite{JR06}.


\def\cprime{$'$}

\end{document}